\documentclass[11pt]{article}
\usepackage{amsmath}
\usepackage{amsthm}
\usepackage{amssymb,amsfonts}
\usepackage{hyperref}
\usepackage{latexsym}
\usepackage{todonotes}
\usepackage{nicefrac}
\usepackage{epsfig}
\usepackage{stmaryrd}
\usepackage{setspace}
\usepackage{enumerate}
\usepackage[all]{xypic}
\usepackage{bbm,ifpdf,tikz}
\ifpdf
\usepackage{pdfsync}
\fi

\oddsidemargin=0pt
\evensidemargin=0pt
\topmargin=0in
\headheight=0pt
\headsep=0pt
\setlength{\textheight}{9in}
\setlength{\textwidth}{6.5in}

\newtheorem{theorem}{Theorem}[section]

\newtheorem{lemma}[theorem]{Lemma}
\newtheorem{proposition}[theorem]{Proposition}
\newtheorem{corollary}[theorem]{Corollary}

{
\theoremstyle{definition}

\newtheorem{example}[theorem]{Example}

\newtheorem{remark}[theorem]{Remark}
}

\newcommand{\excise}[1]{}

\renewcommand{\dim}{\operatorname{dim}}

\renewcommand{\and}{\qquad\text{and}\qquad}
\newcommand{\Ind}{\operatorname{Ind}}

\newcommand{\Hom}{\operatorname{Hom}}
\newcommand{\triv}{\operatorname{triv}}

\newcommand{\N}{\mathbb{N}}
\newcommand{\C}{\mathbb{C}}

\newcommand{\IH}{I\! H}

\newcommand{\cA}{\mathcal{A}}
\newcommand{\la}{\lambda}

\newcommand{\OS}{S}

\renewcommand{\cL}{\mathcal{L}}

\newcommand{\FS}{\operatorname{FS}}

\newcommand{\FSB}{\operatorname{FS}_{\!B}}
\newcommand{\FSA}{\operatorname{FS}_{\!A}}
\newcommand{\op}{\operatorname{op}}
\newcommand{\FSAop}{\FSA^{\op}}
\newcommand{\FSBop}{\FSB^{\op}}
\newcommand{\OSBop}{\OSB^{\op}}
\newcommand{\FCop}{\FC^{\op}}
\newcommand{\Rep}{\operatorname{Rep}}
\newcommand{\FC}{\operatorname{C}}
\newcommand{\OSB}{\operatorname{OS}_{\!B}}
\newcommand{\init}{\operatorname{init}}
\renewcommand{\O}{\operatorname{O}}

\begin{document}
\spacing{1.2}
\noindent{\Large\bf A type B analogue of the category of finite sets with surjections}\\

\noindent{\bf Nicholas Proudfoot\footnote{Supported by NSF grants DMS-1565036 and DMS-1954050.}}\\
Department of Mathematics, University of Oregon,
Eugene, OR 97403\\

{\small
\begin{quote}
\noindent {\em Abstract.}
We define a type $B$ analogue of the category of finite sets with surjections, and we study the representation theory of this category.
We show that the opposite category is quasi-Gr\"obner, which implies that submodules of finitely generated modules are again finitely generated.
We prove that the generating functions of finitely generated modules have certain prescribed poles, and we obtain
restrictions on the representations of type $B$ Coxeter groups that can appear in such modules.  Our main example is a module
that categorifies the degree $i$ Kazhdan--Lusztig coefficients of type $B$ Coxeter arrangements.
\end{quote} }

\section{Introduction}
Let $\FSA$ be the category whose objects are nonempty finite sets and whose morphisms are surjective maps.
The $A$ in the subscript is there to call attention to the fact that this is a ``type $A$'' structure.  More concretely,
for any positive integer $n$, the automorphism group of the object $[n] = \{1,\ldots,n\}$ is the Coxeter group type $A_{n-1}$,
and the set of equivalence classes of morphisms with source $[n]$ may be identified with the set of flats of the Coxeter hyperplane arrangement
of type $A_n$ (Example \ref{FS-flats}).  Our aim is to define and study a ``type $B$'' analogue of this category, which we call $\FSB$.

We begin with the definition.  An object of $\FSB$ is a pair
$(E,\sigma)$, where $E$ is a finite set and $\sigma:E\to E$ is an involution with a unique fixed point.
A morphism from $(E_1,\sigma_1)$ to $(E_2,\sigma_2)$ is a surjective map $\varphi:E_1\to E_2$ with
$\varphi\circ\sigma_1 = \sigma_2\circ \varphi$.  
For any natural number $n$, we write $[-n,n]$
to denote the object given by the set of integers between $-n$ and $n$ (inclusive) and the involution
$k\mapsto -k$; every object of $\FSB$ is isomorphic to $[-n,n]$ for some $n\in\N$. 
The automorphism group $W_n$ of the object $[-n,n]$ is the Coxeter group of type $B_n$, and the set of equivalence classes
of morphisms with source $[-n,n]$ may be identified with the set of flats of the Coxeter hyperplane arrangement
of type $B_n$ (Example \ref{FS-flats}).

\begin{remark}
A more naive definition of $\FSB$ would be to take finite sets with free involutions and equivariant maps.  
This category would have the right automorphism
groups, but it would not have the same relationship with flats of the Coxeter hyperplane arrangements of type $B$.
This distinction is not relevant when one studies the type $B$ analogue of finite sets with {\em injections} \cite{Wilson}, since any equivariant injection
would have to preserve the fixed point.
\end{remark}

\begin{remark}
It is natural to ask why we do not also introduce and study a ``type $D$'' analogue of this category.
The brief answer is that the classes of Coxeter arrangements of types $A$ and $B$ are closed under contraction (Examples \ref{FS-flats} 
and \ref{FSB-flats}), but the analogous statement is false in type $D$.  This property is crucial to the examples that we consider in this paper.
\end{remark}

For the remainder of the introduction, we describe the results for $\FSA$ and $\FSB$ in parallel for comparison.
All results that we state for $\FSA$ appear in either \cite{sam} or \cite{fs-braid}.

\subsection{Finiteness}
The first half of this paper is devoted to applying the Sam--Snowden Gr\"obner theory of combinatorial categories \cite{sam} to 
the opposite category $\FSBop$.
More concretely, we fix a left Noetherian ring $k$ and an essentially small category $\FC$ (which will always be either $\FSA$ or $\FSB$)
and study the category $\Rep_k(\FCop)$ of contravariant functors from $\FC$
to the category of left $k$-modules.  Such a functor is called an $\FCop$-module over $k$.  
Given an object $x$, the {\bf principal projective} $P_x\in \Rep_k(\FCop)$ is the module that assigns to an object $y$
the free $k$-module with basis $\Hom_{\FC}(y,x)$, with maps defined on basis elements by composition. 
A module $M$ is called {\bf finitely generated}
if there exists a finite set of objects $x_1,\ldots,x_r$ and a surjective map from $\oplus_i P_{x_i}$ to $M$.
The following theorem of Sam and Snowden says that finitely generated $\FSAop$-modules form an Abelian category
\cite[Theorem 8.1.2]{sam}.

\begin{theorem}\label{qGA}
Any submodule of a finitely generated $\FSAop$-module over $k$ is itself finitely generated.
\end{theorem}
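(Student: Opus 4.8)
The plan is to deduce this from the Sam--Snowden transfer principle: if $\mathcal{D}$ is a quasi-Gr\"obner category and $k$ is left Noetherian, then $\Rep_k(\mathcal{D})$ is Noetherian, i.e.\ submodules of finitely generated modules are finitely generated \cite{sam}. So it suffices to exhibit $\FSAop$ as a quasi-Gr\"obner category, which by definition means producing a Gr\"obner category $\mathcal{D}'$ together with a functor $\Phi\colon\mathcal{D}'\to\FSAop$ that is essentially surjective and has property (F).

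The obstruction to $\FSAop$ being Gr\"obner on the nose is that it is not directed: the object $[n]$ has the symmetric group on $n$ letters as its automorphism group. The standard fix is to rigidify. I would take $\mathcal{D}'$ to be the opposite of the category $\mathrm{OS}$ of \emph{ordered surjections}, whose objects are the sets $[n]=\{1<\cdots<n\}$ and whose morphisms $[n]\to[m]$ are the surjections $f$ with $\min f^{-1}(1)<\cdots<\min f^{-1}(m)$; equivalently $\Hom_{\mathrm{OS}}([n],[m])$ is the set of partitions of $[n]$ into $m$ blocks. Every non-identity morphism strictly decreases cardinality, so $\mathrm{OS}$ is directed. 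Forgetting the ordering constraint gives a functor $\mathrm{OS}\to\FSA$ and hence $\Phi\colon\mathrm{OS}^{\op}\to\FSAop$, which is clearly essentially surjective. For property (F), fix $[n]\in\FSAop$; a morphism out of $[n]$ there is a surjection $g\colon[m]\twoheadrightarrow[n]$ in $\FSA$, and after relabelling the target by a permutation $\tau$ of $[n]$ it becomes an ordered surjection. Thus $g$ factors as an ordered surjection followed by $\tau^{-1}$, and since there are only $n!$ choices of $\tau$ (all with the same source object $[n]$), property (F) holds.

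It remains to prove that $\mathrm{OS}^{\op}$ is Gr\"obner. Directedness is clear, so the content is that for each $[n]$ the poset $|\mathrm{OS}^{\op}_{[n]}|$ of isomorphism classes of morphisms out of $[n]$ is ordered and Noetherian in the sense of \cite{sam}. An element of this poset is an ordered surjection $[m]\twoheadrightarrow[n]$ with $m\ge n$, which I would encode as a restricted growth string: the word $w=(f(1),\ldots,f(m))$ with $w_1=1$, $w_{i+1}\le 1+\max(w_1,\ldots,w_i)$, and $\max_i w_i=n$. The factorization order coming from the slice category translates into a combinatorial embedding order on these words. I would then check two things: that this ordered set has no infinite antichains or infinite strictly descending chains, which I expect to follow from Higman's lemma on words; and that ordering restricted growth strings first by length and then lexicographically gives a term order compatible with the poset and with composition, so that the Sam--Snowden criterion certifies $\mathrm{OS}^{\op}$ as Gr\"obner. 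Feeding this back into the transfer principle gives the theorem.

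The step I expect to be the main obstacle is this last one: choosing the encoding of ordered surjections and the embedding order so that Higman's lemma genuinely applies, and checking that the chosen term order really is compatible with both the slice-category poset structure and the composition in $\mathrm{OS}$. By comparison, property (F) is routine once the comparison functor is set up --- it only records the finitely many ways the symmetric group can reorder the target of a surjection.
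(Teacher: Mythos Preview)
Your proposal is correct and follows exactly the Sam--Snowden strategy. Note that the paper does not actually give its own proof of Theorem~\ref{qGA}: it simply cites \cite[Theorem 8.1.2]{sam}, where precisely the argument you sketch is carried out (the ordered-surjection category $\mathrm{OS}$, the forgetful functor with property (F), and the Gr\"obner verification via an encoding of ordered surjections as words). Your outline is also a faithful type~$A$ mirror of the paper's own proof of the type~$B$ analogue, Theorem~\ref{qGB}, which constructs $\OSB$, proves property~(F) in Lemma~\ref{F}, and verifies the Gr\"obner condition in Proposition~\ref{grobling}.

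One small caution on the step you flag as the main obstacle: the order on your restricted-growth strings is not the plain subword order, so Higman's lemma does not apply off the shelf; rather, one runs a Nash--Williams minimal bad sequence argument adapted to the ``first-occurrence'' constraint, exactly as in the paper's Proposition~\ref{noetherian} for the type~$B$ case. With that adjustment your plan goes through.
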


We prove here the analogous theorem for $\FSB$.

\begin{theorem}\label{qGB}
Any submodule of a finitely generated $\FSBop$-module over $k$ is itself finitely generated.
\end{theorem}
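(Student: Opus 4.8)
The plan is to invoke the Sam--Snowden Noetherianity criterion \cite{sam}: for $k$ left Noetherian, if a category $\mathcal{C}$ is \emph{quasi-Gr\"obner}, then $\Rep_k(\mathcal{C})$ is Noetherian, meaning that submodules of finitely generated modules are finitely generated. So it suffices to prove that $\FSBop$ is quasi-Gr\"obner, i.e.\ that there exist a \emph{Gr\"obner} category $\mathcal{D}$ and a functor $\Phi\colon\mathcal{D}\to\FSBop$ that is essentially surjective and satisfies property (F). This is exactly the shape of Sam--Snowden's proof of Theorem \ref{qGA}, in which $\mathcal{D}$ is (the opposite of) a category of ``ordered surjections''; my task is to construct and analyze the type $B$ analogue, which I will call $\OSB$.

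The category $\OSB$ is the rigidification of $\FSB$ obtained by breaking the $W_n$-symmetry with the standard order. Its objects are again the sets $[-n,n]$ with their involution, now regarded as linearly ordered, and a morphism $[-m,m]\to[-n,n]$ is an involution-equivariant surjection subject to a normalization --- using the order and the involution to fix a canonical labeling of the fibres, extending Sam--Snowden's ``min-sorted'' convention so as to single out the central fibre containing $0$ and to handle the pairing of the remaining fibres by $\sigma$ --- chosen so that $\OSB$ has no non-trivial automorphisms. The forgetful functor $\OSB\to\FSB$, and hence $\Phi\colon\OSBop\to\FSBop$, is essentially surjective because every object of $\FSB$ is isomorphic to some $[-n,n]$. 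Property (F) is easy here: for a fixed object $[-n,n]$, every morphism into it in $\FSBop$ corresponds to an equivariant surjection \emph{onto} $[-n,n]$, whose source therefore has cardinality at most $2n+1$, so there are only finitely many such morphisms up to isomorphism, and taking all of them (regarded as morphisms in $\OSBop$) provides the required finite list.

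The substantive step is to show that $\OSBop$ is Gr\"obner. Directedness is immediate, since a surjection of finite sets is an isomorphism unless it strictly decreases cardinality, so $\OSBop$ has no non-trivial endomorphisms and no morphisms in both directions between non-isomorphic objects. The remaining point is that, for every object $[-n,n]$, the poset $|(\OSBop)_{[-n,n]}|$ of morphisms out of $[-n,n]$ in $\OSBop$ --- equivalently, of normalized equivariant surjections $[-m,m]\twoheadrightarrow[-n,n]$ for $m\ge n$, ordered by divisibility --- is a well-partial-order and carries an admissible order. I would prove this by encoding such a surjection $\varphi$ as the word $\big(\varphi(-m),\ldots,\varphi(-1),\varphi(0),\varphi(1),\ldots,\varphi(m)\big)$ over the finite alphabet $\{-n,\ldots,n\}$; this word is anti-palindromic with central letter $0$, and the surjectivity and normalization conditions cut out a sub-poset of the set of all such words. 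One then verifies, adapting Sam--Snowden's analysis of the type $A$ poset and carrying the anti-palindrome condition through it, that divisibility corresponds to an order on words to which Higman's lemma applies, yielding the well-partial-order property, and that a ``length, then lexicographic'' refinement furnishes a compatible admissible order. I expect the real work to lie precisely in this encoding: making the involution and its distinguished fixed point interact cleanly with the word combinatorics, so that the central block is handled consistently and the $\sigma$-pairing of the non-central blocks is compatible with both the normalization and the subword order. Granting this, $\OSBop$ is Gr\"obner, $\Phi$ exhibits $\FSBop$ as quasi-Gr\"obner, and the Sam--Snowden criterion gives the theorem.
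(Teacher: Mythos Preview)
Your overall strategy coincides with the paper's: show $\FSBop$ is quasi-Gr\"obner by constructing an ordered-surjections category $\OSB$, proving that $\OSBop$ is Gr\"obner, and verifying that the forgetful functor $\Phi:\OSBop\to\FSBop$ has property (F). But your argument for property (F) contains a genuine error. You claim that, for fixed $[-n,n]$, every relevant morphism corresponds to an equivariant surjection onto $[-n,n]$ ``whose source therefore has cardinality at most $2n+1$''. This is false: the source of a surjection onto $[-n,n]$ can be arbitrarily large. Property (F) concerns morphisms $x\to\Phi(y)$ in $\FSBop$, which are $\FSB$-surjections $y\twoheadrightarrow x$ with $y$ ranging over \emph{all} ordered objects of arbitrary size; there are infinitely many of these, so you cannot simply list them. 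The paper's argument (Lemma~\ref{F}) is different and short: one takes the finite list $\{y_i\}$ to consist of $x$ equipped with each of its finitely many $\sigma$-compatible total orders, with $\varphi_i$ the identity, and then checks that for any ordered $y$ and any $\FSB$-surjection $\varphi:y\to x$, some ordering of $x$ turns $\varphi$ into an $\OSB$-morphism.

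Your sketch of the Gr\"obner step is in the right spirit, but one point also needs more care. The partial order on encoding words that actually reflects divisibility in $\OSBop$ is \emph{not} the plain subword order: one must additionally require that the subword contain the first occurrence of every $\sigma$-orbit appearing in the longer word. This order is strictly finer than the Higman subword order, so Higman's lemma does not apply off the shelf; the paper gives a tailored minimal-bad-sequence argument instead (Proposition~\ref{noetherian}). Once Noetherianity of this modified order is established, the admissible (lexicographic) order and the remaining verifications go through essentially as you indicate.
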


An $\FSAop$-module $M$ is called {\bf finitely generated
in degree \boldmath{$\leq d$}} if the generating objects can all be taken to be sets of cardinality at most $d$.  Similarly, an $\FSB$-module $N$
is called {\bf finitely generated in degree \boldmath{$\leq d$}} if the generating objects can all be taken to have at most $d$ free orbits;
equivalently, they can all be taken to be objects of the form $[-n,n]$ with $n\leq d$.  A module over either category is called 
{\bf \boldmath{$d$}-small} if it is isomorphic to a subquotient of a module that is finitely generated in degree $\leq d$.  Theorems \ref{qGA} 
and \ref{qGB} immediately implies that a $d$-small object is itself finitely generated, though the degree of generation
might be much larger than $d$.

Borrowing terminology from \cite{PR-trees} and \cite{PR-genus}, we call a module 
{\bf \boldmath{$d$}-smallish} if it admits a filtration whose associated graded module is $d$-small.
The motivation for this definition is that, if we have a spectral sequence converging to $N$ for which the modules on the $E_1$-page are all 
$d$-small, the same will necessarily be true for the $E_\infty$-page, which is isomorphic to the associated graded module of $N$ with respect
to some filtration, and $N$ is therefore $d$-smallish.  It is easy to prove that a $d$-smallish module is finitely generated \cite[Proposition 2.14]{PR-trees}.
We do not know whether or not a $d$-smallish module must be $d$-small. 

\subsection{Growth}
Fix a field $k$ of characteristic zero.
If $\lambda = (\la_1,\ldots,\la_{\ell(\la)})$ is a partition of $n$, we write $V_{\la}$ to denote the corresponding
irreducible representation of $S_n$ over $k$.  If $\la$ and $\mu$ are partitions with $|\la|+|\mu|=n$, we write $V_{\la,\mu}$ to denote
the corresponding irreducible representation of $W_n$ over $k$.

For an $\FSAop$-module $M$ and a positive integer $n$, we write $M[n]$ to denote the $S_n$-representation
$M([n])$, and we define the 
generating function
$$H_{\!A}(M; t) := \sum_{n=1}^\infty t^n \dim M[n].$$ 
If $M$ is $d$-smallish, we define the limit $$r_{\!A}^d(M) := \lim_{n\to\infty}\frac{\dim M[n]}{d^n},$$
which we will show always exists.
The following theorem was proved in \cite[Theorem 4.1]{fs-braid}.

\begin{theorem}\label{A-small}
Let $M$ be a $d$-smallish $\FSAop$-module.
\begin{enumerate}
\item The generating function $H_{\!A}(M; t)$
is a rational function whose poles are contained in the set $\{1/j \mid 1\leq j \leq d\}$.
\item The limit $r_{\!A}^d(M)$ exists.  Equivalently, $H_{\!A}(M; t)$ has at worst a simple pole at $1/d$, and $r_{\!A}^d(M)$
is the residue.
\item If $|\la|=n$ and $\Hom_{S_n}\!\big(V_\la, M[n]\big) \neq 0$, then $\ell(\la)\leq d$.  
\end{enumerate}
\end{theorem}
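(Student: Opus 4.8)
\medskip
\noindent\textbf{Proof strategy.}
The plan is to reduce all three parts to the case of a principal projective $P_{[m]}$ with $m\leq d$, and to feed the growth statements into the Sam--Snowden Gr\"obner theory of \cite{sam}. First I would dispose of the smallish-versus-small distinction: if $M$ is $d$-smallish, pick a finite filtration of $M$ with $d$-small subquotients; since $k$ has characteristic zero the induced filtration of the $S_n$-representation $M[n]$ splits, so $M[n]\cong(\operatorname{gr} M)[n]$, whence $M$ and $\operatorname{gr} M$ have the same generating function and the same irreducible constituents in each degree. So assume $M$ is $d$-small, say $M\cong N'/N''$ with $N''\subseteq N'\subseteq N:=\bigoplus_{i=1}^{r}P_{[m_i]}$ and all $m_i\leq d$; then $M[n]$ is a subquotient --- hence, in characteristic zero, a subrepresentation --- of $N[n]$, and $H_{\!A}(M;t)=H_{\!A}(N';t)-H_{\!A}(N'';t)$ with $N',N''$ submodules of $N$. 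Part (3) is then essentially immediate: if $\Hom_{S_n}(V_\la,M[n])\neq 0$ then $\Hom_{S_n}(V_\la,P_{[m_i]}[n])\neq 0$ for some $i$, and since $\Hom_{\FSA}([n],[m])$ is the $S_n$-set of ordered set partitions of $[n]$ into $m$ nonempty blocks --- the stabilizer of one with block sizes $(a_1,\dots,a_m)$ being $S_{a_1}\times\cdots\times S_{a_m}$ --- we have
$$P_{[m]}[n]\;\cong\;\bigoplus_{\substack{a_1,\dots,a_m\geq 1\\ a_1+\cdots+a_m=n}}\Ind_{S_{a_1}\times\cdots\times S_{a_m}}^{S_n}\triv,$$
a sum of permutation modules attached to partitions with exactly $m$ parts; Young's rule then forces every irreducible constituent $V_\la$ to satisfy $\la\trianglerighteq\mu$ for such a $\mu$, and $\la\trianglerighteq\mu$ implies $\ell(\la)\leq\ell(\mu)=m\leq d$.

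Part (2) is then a formal consequence of part (1). From the display, $\dim M[n]\leq\dim N[n]=\sum_i m_i!\,S(n,m_i)$, where $S(n,m)$ is the Stirling number of the second kind, and $m_i!\,S(n,m_i)=O(m_i^{\,n})=O(d^{\,n})$, so $\dim M[n]=O(d^{\,n})$. Granting part (1), a partial-fraction expansion gives $\dim M[n]=\sum_{j=1}^{d}q_j(n)\,j^{\,n}$ for $n\gg 0$, with $\deg q_j$ one less than the order of the pole of $H_{\!A}(M;t)$ at $1/j$; the bound $O(d^{\,n})$ forces $q_d$ to be constant, i.e.\ the pole at $1/d$ is at worst simple, and since $j^{\,n}=o(d^{\,n})$ for $j<d$ we conclude $\dim M[n]/d^{\,n}\to q_d$, so $r_{\!A}^d(M)$ exists and is read off from the simple pole at $1/d$.

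The real content is part (1), and by the reduction above it suffices to show that every submodule $L\subseteq N=\bigoplus_i P_{[m_i]}$ with all $m_i\leq d$ has $H_{\!A}(L;t)$ rational with poles in $\{1/j\mid 1\leq j\leq d\}$. By Theorem \ref{qGA}, $L$ is finitely generated; using the Gr\"obner structure on $\FSAop$ from \cite{sam} I would replace $L$ by its leading-term submodule $\operatorname{in}(L)\subseteq N$, which has the same dimension in every degree and, being monomial, splits as a direct sum of monomial submodules of the $P_{[m_i]}$, each generated by finitely many monomials. Every such monomial is a morphism of $\FSA$ with target some $[m_i]$, so automatically $m_i\leq d$. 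Writing $\Phi_v(t):=\sum_n t^n\cdot\#\{v\circ g\mid g\in\Hom_{\FSA}([n],\operatorname{src}(v))\}$ for the generating function of the principal monomial ideal generated by a monomial $v$, inclusion--exclusion over the generators --- using that the intersection of two such principal ideals is again a finite union of principal ideals, by Noetherianity --- expresses $H_{\!A}(L;t)$ as a $\Z$-linear combination of functions $\Phi_v(t)$, each with $v$ of target $[m]$ for some $m\leq d$.

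The crux --- which I expect to be the main obstacle --- is the claim that $\Phi_v(t)$ is rational with poles contained in $\{1/j\mid 1\leq j\leq m\}$ whenever $v$ has target $[m]$. One identifies $\{v\circ g\}$ with the set of functions $h\colon[n]\to[m]$ satisfying $|h^{-1}(j)|\geq a_j$ for all $j$, where $a_j=|v^{-1}(j)|\geq 1$ (such $h$ are automatically surjective), and counts them by a transfer matrix: read the word $h(1)\cdots h(n)$ letter by letter over the alphabet $[m]$, the state recording, for each $j$, how many further occurrences of $j$ are still required. The transition matrix is block upper-triangular for the finite poset of requirement-vectors, with diagonal entries $\#\{j:\text{no further }j\text{ required}\}\in\{0,1,\dots,m\}$; its eigenvalues therefore lie in $\{0,1,\dots,m\}$, so $\Phi_v(t)$ is rational with poles among $1/1,\dots,1/m$. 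Combining with the preceding paragraph, $H_{\!A}(L;t)$ --- and hence $H_{\!A}(M;t)$ --- is rational with poles in $\{1/j\mid 1\leq j\leq d\}$, which is part (1). The two points demanding genuine care are (i) choosing an admissible order on $\FSAop$ for which it is Gr\"obner and for which leading-term submodules of $N$ admit the monomial-ideal description used above --- this is where the machinery of \cite{sam} enters --- and (ii) the transfer-matrix eigenvalue computation; granting these, the remaining steps are bookkeeping.
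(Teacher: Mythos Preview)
Your smallish-to-small reduction and your arguments for parts (2) and (3) are correct and match the paper's proof of the type $B$ analogue, Theorem~\ref{B-small} (Theorem~\ref{A-small} itself is only cited here from \cite{fs-braid}); the decomposition of $P_{[m]}[n]$ into Young permutation modules together with the dominance bound is exactly the type $A$ counterpart of the paper's Pieri-rule step.

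For part (1), however, there is a real gap. The category $\FSAop$ is only quasi-Gr\"obner---it has nontrivial automorphisms---so ``leading terms'' can only be taken after pulling back to the ordered-surjection Gr\"obner cover (the type $A$ analogue of $\OSBop$). In that cover the principal monomial ideal generated by $v$ is $\{v\circ g\}$ with $g$ ranging over \emph{ordered} surjections, so your fiber-size characterization $|h^{-1}(j)|\geq a_j$ describes the wrong set; and the fact you need for the inclusion--exclusion to terminate---that intersections of principal ideals are again principal---fails in the ordered setting (for $m=2$, the ideals generated by the words $121$ and $122$ meet in an ideal with the two distinct minimal elements $1212$ and $1221$). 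The honest combinatorics here is that of ordered languages, and extracting the pole statement from that is exactly the content of Theorem~\ref{lingual}.

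The paper's route to (1) is shorter and sidesteps this. If a module is \emph{generated} in degree $\leq d$, pulling back to the cover preserves the generation degree (Proposition~\ref{generators}), and Theorem~\ref{lingual} gives poles in $\{1/j:1\leq j\leq d\}$ directly, since the alphabets involved have size at most $d$. A $d$-small $M$ is a priori only finitely generated in some degree $d'\geq d$ (Theorem~\ref{qGA}), yielding rationality with poles in $\{1/j:j\leq d'\}$; the elementary bound $\dim M[n]\leq\dim N[n]=O(d^n)$ then rules out every pole with $j>d$. Substituting this two-step argument for your explicit submodule analysis would close the gap.
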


We now state the type $B$ analogue of Theorem \ref{A-small}.
For an $\FSBop$-module $N$ and a nonnegative integer $n$, we write $N[-n,n]$ to denote the $W_n$-representation
$N([-n,n])$, and we define the 
generating function $$H_{\!B}(N; t) := \sum_{n=0}^\infty t^n \dim N[-n,n].$$
If $N$ is $d$-smallish, we define the limit $$r_{\!B}^d(N) := \lim_{n\to\infty}\frac{\dim N[-n,n]}{(2d+1)^n},$$
which we will show always exists.

\begin{theorem}\label{B-small}
Let $N$ be a $d$-smallish $\FSBop$-module.
\begin{enumerate}
\item The generating function $H_{\!B}(M; t)$
is a rational function whose poles are contained in the set $\{1/j \mid 1\leq j \leq 2d+1\}$.
\item The limit $r_{\!B}^d(N)$ exists.  Equivalently, 
$H_{\!B}(N; t)$ has at worst a simple pole at $1/(2d+1)$, and $r_{\!B}^d(N)$ is the residue.
\item If $|\la|+|\mu|=n$ and $\Hom_{W_n}\!\big(V_{\la,\mu}, N[-n,n]\big) \neq 0$, then $\ell(\la)\leq d+1$ and $\ell(\mu)\leq d$.  
\end{enumerate}
\end{theorem}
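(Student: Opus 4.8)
The plan is to deduce everything from two inputs: an explicit understanding of the principal projectives $P_{[-n,n]}$ with $n\le d$, and the Gr\"obner‑theoretic description of submodules of finite sums of such projectives that already underlies Theorem~\ref{qGB}. I would begin with the projectives themselves. An equivariant surjection $[-n,n]\twoheadrightarrow[-d,d]$ is determined by its restriction to $\{1,\dots,n\}$, an arbitrary function to $\{-d,\dots,d\}$ whose image together with its negation is all of $\{-d,\dots,d\}$; inclusion--exclusion over the pairs $\{\pm j\}$ that are missed gives
$$\dim P_{[-d,d]}[-n,n]=\sum_{k=0}^{d}(-1)^k\binom{d}{k}(2d+1-2k)^n,$$
so $H_{\!B}(P_{[-d,d]};t)=\sum_{k=0}^{d}(-1)^k\binom{d}{k}\bigl(1-(2d+1-2k)t\bigr)^{-1}$ is rational, has a simple pole at $1/(2d+1)$, and has no poles outside $\{1/j : 1\le j\le 2d+1\}$; note also the crude bound $\dim P_{[-n,n]}[-m,m]\le(2n+1)^m$. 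As a $W_m$‑representation, $P_{[-n,n]}[-m,m]$ is the direct sum, over $W_m$‑orbits of morphisms, of permutation modules $\Ind_{W_p\times S_{c_1}\times\cdots\times S_{c_n}}^{W_m}\triv$, where $W_p$ is the stabilizer of the block containing the fixed point $0$ and the $S_{c_j}$ are the stabilizers of the remaining $n$ blocks.

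For part~(3) I would analyze this induced module using the type~$B$ branching rules: first induce $\triv$ from $S_{c_1}\times\cdots\times S_{c_n}$ up to $W_{c_1+\cdots+c_n}$, then up to $W_m$ against $\triv_{W_p}$. The inner step produces only $V_{\alpha,\beta}$ with $\ell(\alpha),\ell(\beta)\le n$ by the Littlewood--Richardson rule, and the outer step, being $\Ind(V_{(p),\emptyset}\boxtimes V_{\alpha,\beta})$, produces $V_{\la,\beta}$ with $\la/\alpha$ a horizontal strip by the Pieri rule, so $\ell(\la)\le\ell(\alpha)+1\le n+1$. Hence every constituent $V_{\la,\mu}$ of $P_{[-n,n]}[-m,m]$ satisfies $\ell(\la)\le n+1$ and $\ell(\mu)\le n$. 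Since a module finitely generated in degree $\le d$ is a quotient of a finite sum of $P_{[-n,n]}$ with $n\le d$, and since passing to a subquotient and to the graded pieces of a finite filtration only removes $W_m$‑constituents (we are in characteristic zero), this bound propagates to every $d$‑small and then every $d$‑smallish module, giving part~(3).

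For parts~(1) and~(2) I would first note that all three assertions are compatible with finite filtrations and finite direct sums, so it suffices to treat a $d$‑small module $M$; writing $M=M_1/M_2$ with $M_2\subseteq M_1$ submodules of a module $M'$ finitely generated in degree $\le d$, pulling $M_1,M_2$ back to submodules of a surjecting $\bigoplus_i P_{[-n_i,n_i]}$ with $n_i\le d$, and using additivity of $H_{\!B}$ on short exact sequences, one reduces to the claim that every submodule $K$ of $\bigoplus_i P_{[-n_i,n_i]}$ with all $n_i\le d$ has $H_{\!B}(K;t)$ rational with poles in $\{1/j : 1\le j\le 2d+1\}$. Here I would invoke the Gr\"obner machinery developed for Theorem~\ref{qGB}: with respect to a suitable term order, $K$ has the same Hilbert series as a monomial submodule, whose value at $[-m,m]$ has a basis indexed by a Noetherian ideal in the poset of normal‑form words for $[-n_i,n_i]$; the generating function of such an ideal, and of its complement, is a finite nonnegative combination of series $t^a/\prod_{j\in J}(1-jt)$ with $J\subseteq\{1,\dots,2n_i+1\}\subseteq\{1,\dots,2d+1\}$, because a normal‑form word can be extended by one free orbit in at most $2n_i+1$ essentially distinct ways and that count stabilizes along each branch. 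This yields part~(1). For part~(2), the bound $\dim P_{[-n,n]}[-m,m]\le(2n+1)^m$ propagates through the same reductions to $\dim N[-n,n]=O\bigl((2d+1)^n\bigr)$ for every $d$‑smallish $N$; combined with the rational form from part~(1), this forces the pole of $H_{\!B}(N;t)$ at $1/(2d+1)$ to be at most simple, and a standard partial‑fraction computation then identifies $r_{\!B}^d(N)$ with the corresponding (suitably normalized) residue, in particular establishing existence of the limit.

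The main obstacle is the Gr\"obner step: turning the normal‑form description of submodules of the $P_{[-n,n]}$ into the sharp conclusion that the only possible poles are at reciprocals of the integers $1,\dots,2d+1$, rather than merely that the radius of convergence is at least $1/(2d+1)$. The point to get right is the bookkeeping showing that a free orbit can be inserted into a normal‑form word in at most $2n_i+1$ genuinely different ways, and that it is exactly this number---not a larger quantity depending on $m$---that controls the denominators. Granting this, parts~(1) and~(2) are formal, and part~(3) is an independent and fairly routine application of the type~$B$ branching rules.
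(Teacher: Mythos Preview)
Your proposal is correct and follows essentially the same strategy as the paper. For parts~(2) and~(3) the two arguments coincide: both reduce to the principal projectives $P_{[-n,n]}$ with $n\le d$, compute the asymptotics of $\dim P_{[-n,n]}[-m,m]$ (your inclusion--exclusion formula makes explicit the paper's remark that almost all equivariant maps are surjective), decompose $P_{[-n,n]}[-m,m]$ as a sum of permutation modules $\Ind_{W_p\times S_{c_1}\times\cdots\times S_{c_n}}^{W_m}\triv$, and bound $\ell(\la),\ell(\mu)$ via the type~$B$ Pieri rule; your two-step factorization through $W_{c_1+\cdots+c_n}$ simply unpacks what the paper means by ``induction on $d$.''

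For part~(1) there is a small but genuine difference in packaging. The paper first invokes Theorem~\ref{lingual} for a module finitely generated in some degree $\le d'$ (obtaining rationality with poles in $\{1/j:j\le 2d'+1\}$), and then uses the crude bound $\dim N[-n,n]\le C(2d+1)^n$ to discard poles at $1/j$ with $j>2d+1$. You instead reduce to submodules $K\subseteq\bigoplus_i P_{[-n_i,n_i]}$ and argue directly that the initial (monomial) submodule corresponds to an ideal in the word poset on an alphabet of size $2n_i+1$, hence to an ordered language with the sharp pole set from the outset. Both routes rest on the same fact---Corollary~\ref{ordered} and the $\O$-lingual structure of Proposition~\ref{grobling}---and the ``obstacle'' you flag is exactly what that proposition establishes; your version avoids the auxiliary $d'$ but requires one extra reduction (pulling $K$ back along $\Phi$ to $\OSBop$ and lifting to a submodule of a sum of $\OSBop$-projectives) that you should make explicit.
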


\subsection{Examples}
For any nonempty finite set $E$, we define in Example \ref{type A arrangement} a hyperplane arrangement $\cA_E$ 
with the property that $\cA_{[n]}$ is the Coxeter arrangement of type $A_n$.  Similarly, for any object $(E,\sigma)$ of $\FSB$,
we define in Example \ref{type B arrangement} a hyperplane arrangement $\cA_{(E,\sigma)}$ 
with the property that $\cA_{[-n,n]}$ is the Coxeter arrangement of type $B_n$.

In Section \ref{sec:OS}, we define
an $\FSA$-module $\OS_{\!A}^i$ that takes $E$ to the degree $i$ part of the Orlik--Solomon algebra of $\cA_E$;
by taking the linear dual, we obtain an $\FSAop$-module $(\OS_{\!A}^i)^*$.  Similarly, we define
an $\FSB$-module $\OS_{\!B}^i$ that takes $(E,\sigma)$ to the degree $i$ part of the Orlik--Solomon algebra of $\cA_{(E,\sigma)}$
and the dual $\FSBop$-module $(\OS_{\!B}^i)^*$.
The following proposition was proved in \cite[Proposition 5.1]{fs-braid}.

\begin{proposition}\label{OSA-prop}
The $\FSAop$-module $(\OS^0_{\!A})^*$ is $1$-small.  For all $i>0$, the $\FSAop$-module $(\OS^i_{\!A})^*$ is $2i$-small, and 
$$r_{\!A}^{2i}\!\left((\OS^i_{\!A})^*\right)=0.$$
\end{proposition}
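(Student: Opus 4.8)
Although the statement is \cite[Proposition 5.1]{fs-braid}, here is how I would approach it. For $i=0$ there is nothing to do beyond an identification: $\OS^0(\cA_E)=k$ for every $E$, with the structure maps being the units, so $\OS^0_{\!A}$ is the constant functor $k$, and hence so is its dual. On the other hand $\Hom_{\FSA}(E,[1])$ is a one-element set for every nonempty $E$, so the principal projective $P_{[1]}$ is also the constant functor $k$. Thus $(\OS^0_{\!A})^*\cong P_{[1]}$ is finitely generated in degree $\le 1$, in particular $1$-small.

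For $i\ge 1$ the key structural input is the Brieskorn decomposition
\[
\OS^i(\cA_E)\;\cong\;\bigoplus_{X}\OS^i\big((\cA_E)_X\big),
\]
the sum ranging over corank-$i$ flats $X$ of $\cA_E$, where $(\cA_E)_X$ is the localization of $\cA_E$ at $X$. Because the class of type-$A$ Coxeter arrangements is closed under contraction (Example~\ref{FS-flats}), each $(\cA_E)_X$ is a product of smaller type-$A$ Coxeter arrangements; moreover a corank-$i$ flat of a braid arrangement has nontrivial blocks of sizes $s_1,\dots,s_r\ge 2$ with $\sum(s_j-1)=i$, so it is supported on $\sum s_j=i+r\le 2i$ of the coordinates. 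Next I would organize the flats of $\cA_E$ by $\Aut(E)$-orbit type and check that the structure maps of $\OS^i_{\!A}$ are triangular for the natural partial order on orbit types: a surjection $E\twoheadrightarrow E'$ can only merge blocks (and coordinates), which weakly decreases both the rank of a flat and the size of its support, and a Brieskorn summand of rank $i$ maps to $0$ in $\OS^i$ of the target unless rank is preserved. This yields a finite filtration of $\OS^i_{\!A}$ whose associated graded pieces are, for each orbit type $\tau$ supported on $t(\tau)\le 2i$ coordinates, a module induced from an $\Aut([t(\tau)])$-representation carried by the top Orlik--Solomon space of the corresponding local arrangement. Dualizing, $(\OS^i_{\!A})^*$ inherits a filtration whose graded pieces are $2i$-small; the remaining work is to upgrade this to the assertion that $(\OS^i_{\!A})^*$ is itself $2i$-small, i.e.\ a single subquotient of one $\FSAop$-module finitely generated in degree $\le 2i$, by a uniform construction rather than piece by piece.

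For the residue, note that the Poincar\'e polynomial of the type-$A_n$ Coxeter arrangement is $\prod_{k=1}^n(1+kt)$, so $\dim\OS^i(\cA_{[n]})$ is the coefficient of $t^i$, namely the unsigned Stirling number $c(n+1,n+1-i)$, which for fixed $i\ge 1$ is a polynomial in $n$ of degree $2i$. Hence
\[
H_{\!A}\big((\OS^i_{\!A})^*;t\big)=\sum_{n\ge 1}c(n+1,n+1-i)\,t^n
\]
is a rational function whose only pole is at $t=1$; in particular it has no pole at $t=1/(2i)$ since $2i\ge 2$, so Theorem~\ref{A-small}(2) gives $r_{\!A}^{2i}\big((\OS^i_{\!A})^*\big)=0$. (Equivalently, $\dim\OS^i(\cA_{[n]})$ grows only polynomially, so $\dim\OS^i(\cA_{[n]})/(2i)^n\to 0$ directly.)

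The main obstacle is the step flagged above: the Brieskorn/contraction bookkeeping is naturally a statement about individual vector spaces together with a filtration by orbit type, and promoting it to an honest $\FSAop$-module statement — that $(\OS^i_{\!A})^*$ is a subquotient of a \emph{single} degree-$\le 2i$-generated module, compatibly with all surjection morphisms at once — is where the real care is required; everything else (the $i=0$ case and the residue computation) is essentially formal once this structural description is in place.
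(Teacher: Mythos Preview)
The paper does not reprove this proposition (it cites \cite{fs-braid}), but the proof of the type $B$ analogue, Proposition~\ref{OSB-prop}, exhibits the intended method, and it is not the Brieskorn route you sketch. The argument there is: since the Orlik--Solomon algebra is generated in degree~$1$, $\OS^i_{\!A}$ is a quotient of $(\OS^1_{\!A})^{\otimes i}$, hence $(\OS^i_{\!A})^*$ embeds in $\big((\OS^1_{\!A})^*\big)^{\otimes i}$. One then shows, by explicit manipulations with the dual generators $v_{ef}$, that every spanning element $v_{e_1f_1}\otimes\cdots\otimes v_{e_if_i}$ of the tensor power lies in the span of pullbacks from objects of size at most~$2i$. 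This exhibits $(\OS^i_{\!A})^*$ as a \emph{submodule} of a single module finitely generated in degree~$\le 2i$, which is exactly $2i$-small.

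Your Brieskorn approach is structurally appealing, but the very step you flag as ``where the real care is required'' is a genuine gap, not a routine cleanup. A filtration whose graded pieces are each $2i$-small yields only that $(\OS^i_{\!A})^*$ is $2i$-\emph{smallish}, and the paper explicitly states that it is not known whether $d$-smallish implies $d$-small. So you cannot simply promote the filtration conclusion; you would need an independent construction of a single degree-$\le 2i$ ambient module. The tensor-power embedding supplies exactly that, bypassing the smallish-versus-small issue entirely. Your $i=0$ identification and your residue computation (polynomial growth of Stirling numbers versus the exponential $(2i)^n$) are both fine and essentially match the paper's reasoning for the corresponding parts.
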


Here we prove the following type $B$ analogue of Proposition \ref{OSA-prop}.

\begin{proposition}\label{OSB-prop}
The $\FSBop$-module $(\OS^0_{\!B})^*$ is $0$-small.  For all $i>0$, the $\FSBop$-module$(\OS^i_{\!B})^*$ is $(2i-1)$-small,
and $$r_{\!B}^{2i-1}\!\left((\OS^i_{\!B})^*\right)=0.$$
\end{proposition}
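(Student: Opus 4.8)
The plan is to prove this as the type $B$ analogue of Proposition~\ref{OSA-prop}, following the argument of \cite{fs-braid}. The degree-$0$ case is immediate: $\OS^0_{\!B}$ is the constant functor $k$, which coincides with the principal projective $P_{[0]}$ because every object of $\FSB$ has a unique surjection onto $[0]$; hence $(\OS^0_{\!B})^*=P_{[0]}$ is finitely generated in degree $0$, so it is $0$-small.

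Now fix $i>0$. The structural input is the Brieskorn decomposition
$$\OS^i(\cA_{(E,\sigma)})\;\cong\;\bigoplus_{X}\OS^{\mathrm{top}}(\cA_X),$$
the sum running over codimension-$i$ flats $X$ of $\cA_{(E,\sigma)}$, with $\cA_X$ the localization at $X$. By the classification of flats of type $B$ Coxeter arrangements (Example~\ref{FSB-flats}), a codimension-$i$ flat $X$ is the data of a $\sigma$-stable block $B_0$ containing the fixed point together with $\sigma$-conjugate pairs of blocks on the remaining free orbits; $\cA_X$ is a product of a type $B$ arrangement on $B_0$ with braid arrangements on the other blocks, of total rank $i$, and the non-singleton blocks of $X$ involve at most $2i$ free orbits. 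The heart of the proof is to upgrade this support bound to the assertion that $(\OS^i_{\!B})^*$ is $(2i-1)$-small. As in type $A$, this is done by fitting $\OS^i$ into the Whitney-homology presentation of the Orlik--Solomon algebra — so that $\OS^i(\cA_{(E,\sigma)})$ is realized as the cohomology of a finite complex whose terms are spanned by flags $\widehat 0<Y_1<\cdots<Y_k$ of flats lying below a codimension-$i$ flat — and observing that each such flag is the same datum as a composable chain of morphisms of $\FSB$ which, after collapsing singleton blocks, ends at an object $[-m,m]$; by the support bound together with the fact that contractions of type $B$ Coxeter arrangements are again of type $B$ (Example~\ref{FSB-flats}), one can arrange $m\le 2i-1$. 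Dualizing, $(\OS^i_{\!B})^*$ is the cohomology of a finite complex of $\FSBop$-modules each finitely generated in degree $\le 2i-1$, and the cohomology of such a complex is a subquotient of a module finitely generated in degree $\le 2i-1$, hence $(2i-1)$-small. The gain of one orbit over the type $A$ bound reflects the coordinate hyperplanes $x_e=0$ present in type $B$; the combinatorial crux is showing that the codimension-$i$ flats whose non-singleton blocks use a full $2i$ free orbits — those with $B_0=\{0\}$ and every nontrivial block of size two — are still controlled at degree $2i-1$. This last point is the main obstacle.

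It remains to see that $r_{\!B}^{2i-1}\!\left((\OS^i_{\!B})^*\right)=0$. The limit exists by Theorem~\ref{B-small}(2) and equals $\lim_{n\to\infty}\dim\OS^i(\cA_{[-n,n]})\big/(4i-1)^n$. But the Brieskorn decomposition exhibits $\dim\OS^i(\cA_{[-n,n]})$, for $n$ large, as a polynomial in $n$: it is the sum, over the finitely many combinatorial types of codimension-$i$ flat, of the number of flats of that type in $\cA_{[-n,n]}$ — a polynomial in $n$ — times the dimension of the corresponding localized top Orlik--Solomon space — a constant. A polynomial is dominated by $(4i-1)^n$, so the limit is $0$.
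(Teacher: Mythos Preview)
Your proposal correctly identifies the structure of the problem and the location of the difficulty, but it does not close the gap you yourself flag. You assert that ``one can arrange $m\le 2i-1$'' and then immediately concede that the codimension-$i$ flats supported on exactly $2i$ free orbits --- those with $B_0=\{0\}$ and $i$ conjugate pairs of size-two blocks --- are ``the main obstacle'' that must still be controlled at degree $2i-1$. That sentence is the whole proposition: without it you have only shown $(2i)$-smallness, which is both easier and weaker. The Whitney-homology/Brieskorn complex you invoke has terms genuinely supported in degree $2i$, so exhibiting $(\OS^i_{\!B})^*$ as cohomology of that complex does not by itself yield $(2i-1)$-smallness; you need an additional argument to push those terms down, and none is supplied.

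The paper takes a different and more hands-on route. Rather than the Brieskorn decomposition, it uses that $\OS^i_{\!B}$ is a quotient of $(\OS^1_{\!B})^{\otimes i}$, so $(\OS^i_{\!B})^*$ embeds in $\big((\OS^1_{\!B})^*\big)^{\otimes i}$. A basis element $v_{e_1f_1}\otimes\cdots\otimes v_{e_if_i}$ is automatically pulled back from the subobject on $\{\pm e_j,\pm f_j,0\}$, which has at most $2i$ free orbits; the only remaining case is when it has exactly $2i$. For that case the paper writes down three explicit morphisms $\psi_1,\psi_2,\psi_3:[-2i,2i]\to[1-2i,2i-1]$ and checks by direct calculation that
\[
\psi_1^*(v_{2i-1,1-2i})-\psi_2^*(v_{2i-1,1-2i})+\psi_3^*(v_{2i-1,1-2i})=v_{2i-1,2i},
\]
so the last tensor factor --- and hence the whole basis element --- is a combination of pullbacks from $[1-2i,2i-1]$. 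This three-map identity is precisely the missing ingredient in your sketch; it resolves your ``main obstacle'' by explicit construction rather than by any further structural argument.

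Your treatment of the limit $r^{2i-1}_{\!B}$ via polynomial growth is fine and matches the paper's, which simply bounds $\dim\OS^i_{\!B}[-n,n]\le\binom{n^2}{i}$.
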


\begin{remark}
The smallness shift between Propositions \ref{OSA-prop} and \ref{OSB-prop} (which we will see again in Theorems \ref{KLA-thm} and \ref{KLB-thm})
can be blamed on the fact that the object $[n]$ of $\FSA$ corresponds to the Coxeter group and Coxeter arrangement of type $A_{n-1}$, 
while the objet $[-n,n]$ of $\FSB$ corresponds to the Coxeter group and Coxeter arrangement of type $B_n$.
It is also related to the fact that $[1]$ is the terminal object of $\FSA$ while $[0,0]$ is the terminal object of $\FSB$.
\end{remark}

For any hyperplane arrangement $\cA$, one may define a singular algebraic variety $X_\cA$ called the {\bf reciprocal plane} of $\cA$.
This variety has vanishing intersection cohomology in odd degree, and the even degree intersection cohomology Poincar\'e polynomial
coincides with the {\bf Kazhdan--Lusztig polynomial} of the associated matroid \cite[Proposition 3.12]{EPW}.
In Section \ref{sec:KL}, we define an $\FSA$-module $D^i_{\!A}$ that takes a nonempty finite set $E$ to 
$\IH^{2i}\!\left(X_{\cA_E}\right)$ and an $\FSB$-module $D^i_{\!B}$ that takes an object $(E,\sigma)$ to 
$\IH^{2i}\big(X_{\cA_{(E,\sigma)}}\big)$.
One can think of $D^i_{\!A}$ and $D^i_{\!B}$ as categorifications of the degree $i$ Kazhdan--Lusztig coefficients of Coxeter arrangements
in types $A$ and $B$, respectively.
The following theorem was proved in \cite[Theorem 6.1]{fs-braid}.

\begin{theorem}\label{KLA-thm}
For any $i>0$, the $\FSAop$-module $\left(D^i_{\!A}\right)^*$ is $2i$-smallish,\footnote{In the published version of the paper, we claimed
that the module was $2i$-small, but we only proved that it is $2i$-smallish.  This mistake was corrected in the arXiv version.}
and we have
$$r_{2i}\!\left(\left(D^i_{\!A}\right)^*\right) = \frac{\dim D^{i-1}_{\!A}[2i]}{|S_{2i}|} = \frac{\dim D^{i-1}_{\!A}[2i]}{(2i)!}.$$
\end{theorem}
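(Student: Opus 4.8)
The plan is to follow the strategy of \cite[Theorem 6.1]{fs-braid}: deduce the statement from the decomposition theorem for the reciprocal plane, using that type $A$ arrangements are closed under contraction, together with Theorem \ref{A-small} and Proposition \ref{OSA-prop}. The argument runs by induction on $i$; the inputs for a fixed $i$ are the modules $(D^c_{\!A})^*$ for $c<i$ (which are $2c$-smallish by induction) and $(\OS^c_{\!A})^*$ for $c<i$ (which are $2c$-small, or $1$-small for $c=0$, by Proposition \ref{OSA-prop}).

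The geometric heart is a decomposition of $\IH^*(X_{\cA_E})$ that is natural in $E\in\FSA$. The variety $X_{\cA_E}$ is conical and is stratified by the flats of $\cA_E$, equivalently by equivalence classes of morphisms out of $E$: the stratum attached to $\varphi\colon E\twoheadrightarrow E'$ is a copy of the arrangement complement of $\cA_{E'}$ times a point, and a transverse slice is the reciprocal plane of the localization $\cA_\varphi=\prod_B\cA_B$, with $B$ running over the fibers of $\varphi$. Deligne's construction of $\IC_{X_{\cA_E}}$ --- essentially the computation of \cite{EPW} identifying $\IH^*(X_{\cA_E})$ with the Kazhdan--Lusztig polynomial of $\cA_E$ --- then produces a spectral sequence of $\FSAop$-modules converging to $(D^i_{\!A})^*$, whose $E_1$-page is a finite direct sum of modules of ``assembled'' type: fix an object $E'$, a reduced Orlik--Solomon class of $\cA_{E'}$ of some positive degree, and an intersection-cohomology class of $\prod_B X_{\cA_B}$ of complementary degree $<2i$ (a class drawn from finitely many of the $(D^c_{\!A})^*$, $c<i$, distributed among the fibers); the resulting module sends $E$ to the sum over $\varphi\colon E\twoheadrightarrow E'$ of the corresponding tensor product over the fibers of $\varphi$. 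The outermost datum contributes the principal projective $P_{E'}$, which for $|E'|=m$ is $m$-small with $r_m(P_{[m]})=1$, since $\dim P_{[m]}[n]$ equals the number of surjections $[n]\twoheadrightarrow[m]$, asymptotic to $m^n$.

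The rest is bookkeeping, organized as in \cite{fs-braid}. The degree and support conditions for non-vanishing of an $\IC$-stalk contribution force the number of fibers $m=|E'|$ of the relevant $\varphi$ to satisfy $m\le 2i$; combined with Proposition \ref{OSA-prop} and the inductive hypothesis for the active-fiber factors, this shows each $E_1$-term is $2i$-small, hence $(D^i_{\!A})^*$ is $2i$-smallish (but not necessarily $2i$-small, as the abutment filtration need not split as $\FSAop$-modules). For the residue, Theorem \ref{A-small}(2) gives that $r_{2i}\big((D^i_{\!A})^*\big)$ is the sum of $r_{2i}$ over the associated-graded pieces that are genuinely $2i$-small, i.e.\ those with $m=2i$; tracing degrees, the only such piece is $P_{[2i]}\otimes_{k[S_{2i}]}D^{i-1}_{\!A}[2i]$, the reduced Orlik--Solomon and local factors combining to the $S_{2i}$-representation $D^{i-1}_{\!A}[2i]=\IH^{2i-2}(X_{\cA_{[2i]}})$. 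As $S_{2i}=\Aut([2i])$ acts freely on the surjections $[n]\twoheadrightarrow[2i]$, this module has dimension $S(n,2i)\cdot\dim D^{i-1}_{\!A}[2i]$ at $[n]$ (where $S(n,2i)$ is a Stirling number of the second kind), so $r_{2i}\big((D^i_{\!A})^*\big)=\lim_n S(n,2i)\,\dim D^{i-1}_{\!A}[2i]/(2i)^n=\dim D^{i-1}_{\!A}[2i]/(2i)!$.

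The main obstacle is the second paragraph: making the decomposition theorem for $X_{\cA_E}$ functorial enough in $\FSA$ to yield an honest spectral sequence of $\FSAop$-modules with the stated $E_1$-page, and then pinning down which $\IC$-stalk contributions survive --- this is what produces the sharp bound $m\le 2i$ and, at $m=2i$, isolates the single local factor of degree $2i-2$ accounting for $D^{i-1}_{\!A}[2i]$ in the residue. Granting this, rationality, the location of the poles, and the extraction of the residue are formal consequences of Theorem \ref{A-small}.
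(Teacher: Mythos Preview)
Your overall strategy is exactly the one used in \cite{fs-braid} (and mirrored in this paper's proof of Theorem~\ref{KLB-thm}): run a spectral sequence of $\FSAop$-modules converging to $(D^i_{\!A})^*$, show the $E_1$-terms are $2i$-small, and extract the residue from the unique term that is $2i$-small but not $(2i-1)$-small. Your geometric description of the stratification of $X_{\cA_E}$ is also correct, and your residue computation is the right one.

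However, the structure of your $E_1$-page is inverted, and this is not merely cosmetic. The spectral sequence actually used has
\[
N(i)_1^{p,q} \;=\; \bigoplus_{\dim F = p} \OS^{2i-p-q}(\cA_F)\otimes \IH^{2(i-q)}(X_{\cA^F}),
\]
so the Orlik--Solomon factor comes from the \emph{localization} $\cA_F\cong\prod_B \cA_B$ (the product over fibers), while the intersection-cohomology factor comes from the \emph{contraction} $\cA^F\cong \cA_{E'}$. You have these swapped: you put the $\OS$-class on $\cA_{E'}$ and distribute the $\IH$-classes over the fibers. With the correct orientation, the $\IH$-factor is the \emph{fixed} vector space $D^{i-q}_{\!A}[p+1]$ (no induction on $i$ needed), and the entire $\FSAop$-module structure---hence all smallness---lives in the $\OS$-of-localization factor, which is handled by Proposition~\ref{OSA-prop} together with the composition lemma. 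The bound $m\le 2i$ then drops out of the vanishing $D^{i-q}_{\!A}[m]=0$ for $m\le 2(i-q)+1$ applied on the line $p+q=2i$, and the lone surviving term at $m=2i$ is $(P_{[2i]}\otimes D^{i-1}_{\!A}[2i])^{S_{2i}}$, exactly your final answer. By contrast, with your orientation the $c=0$ fiber contributions produce $P_{[m]}\otimes \OS^{2i}(\cA_{[m]})$, which is nonzero only for $m\ge 2i+1$, so the bookkeeping does not yield $2i$-smallish; and your claim that ``the reduced Orlik--Solomon and local factors combine to $D^{i-1}_{\!A}[2i]$'' has no mechanism behind it.

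In short: keep your architecture, but swap localization and contraction in the $E_1$-page, drop the induction on $i$, and let Proposition~\ref{OSA-prop} do the work.
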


Here we prove the following type $B$ analogue of Theorem \ref{KLA-thm}.

\begin{theorem}\label{KLB-thm}
For any $i>0$, the $\FSBop$-module $\left(D^i_{\!B}\right)^*$ is $(2i-1)$-smallish, and we have
$$r_{2i-1}\!\left(\left(D^i_{\!B}\right)^*\right) = \frac{\dim D^{i-1}_{\!B}[1-2i,2i-1]}{|W_{2i-1}|} = \frac{\dim D^{i-1}_{\!B}[1-2i,2i-1]}{2^{2i-1}(2i-1)!}.$$
\end{theorem}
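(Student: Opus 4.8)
The plan is to run the proof of Theorem \ref{KLA-thm} from \cite[Section 6]{fs-braid} in type $B$, by induction on $i$, invoking Theorem \ref{KLB-thm} for smaller values of $i$ and its type-$A$ companion Theorem \ref{KLA-thm} along the way. The geometric engine is the analysis of the intersection cohomology of reciprocal planes carried out in \cite{EPW} and \cite{fs-braid}: applied to a resolution of $X_{\cA_{(E,\sigma)}}$, it expresses $\IH^{2i}\big(X_{\cA_{(E,\sigma)}}\big)$ in terms of the intersection cohomology of the reciprocal planes of the contractions $\cA^F$ and localizations $\cA_F$ at proper flats $F$, together with Orlik--Solomon data. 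The point that makes this usable is that it never leaves the world of $\FSA$- and $\FSB$-modules: by the remark in the introduction each contraction $\cA^F$ is again a type-$B$ Coxeter arrangement $\cA_{(E',\sigma')}$, where $(E,\sigma)\to(E',\sigma')$ is the morphism of $\FSB$ corresponding to $F$; and each localization $\cA_F$ is a product of a type-$B$ Coxeter arrangement (from the fiber over the fixed point) and several type-$A$ Coxeter arrangements (one for each free $\sigma$-orbit of nonzero fibers), so that, by Künneth for intersection cohomology sheaves, $\IH^\bullet(X_{\cA_F})$ is a tensor product of values of $D^\bullet_{\!A}$ and $D^\bullet_{\!B}$. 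Making this functorial in $\FSB$ produces a spectral sequence converging to $(D^i_{\!B})^*$ whose $E_1$-page is a finite direct sum of $\FSBop$-modules, each built from dual Orlik--Solomon modules $(\OS^a_{\!A})^*$, $(\OS^a_{\!B})^*$ and dual Kazhdan--Lusztig modules $(D^b_{\!A})^*$, $(D^b_{\!B})^*$ by external tensor product and induction along the morphisms of $\FSB$ indexing the flats.

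With this in hand, Propositions \ref{OSA-prop} and \ref{OSB-prop} together with the inductive hypothesis make every factor that appears small of a controlled degree, and a bookkeeping check shows these degrees combine so that each $E_1$-term is at most $(2i-1)$-small; the shift from the type-$A$ bound $2i$ to the type-$B$ bound $2i-1$ is forced here, for the reasons explained in the remark following Proposition \ref{OSB-prop}. By the spectral-sequence principle recalled in the introduction, $(D^i_{\!B})^*$ is therefore $(2i-1)$-smallish.

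For the residue, Theorem \ref{B-small}(2) identifies $r_{2i-1}\!\big((D^i_{\!B})^*\big)$ with the residue of $H_{\!B}\!\big((D^i_{\!B})^*;t\big)$ at $1/(4i-1)$, an invariant that is additive along the spectral sequence, is unchanged on passing to the $E_\infty$-page, and receives no contribution from any constituent of strictly smaller growth rate; in particular it is computed from the $E_1$-page. Each $E_1$-term carrying a nontrivial Orlik--Solomon factor has residue $0$ by the vanishing statements in Propositions \ref{OSA-prop} and \ref{OSB-prop}; the terms whose contraction has more than $2i-1$ free orbits do not occur, by the smallness established above, while those with fewer than $2i-1$ free orbits have growth rate below $(4i-1)^n$ and contribute $0$; and the inductive hypothesis together with those vanishings disposes of the remaining corrections. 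What survives is the contribution of the flats $F$ for which $\cA^F$ is the Coxeter arrangement of type $B_{2i-1}$, namely $\cA_{[1-2i,2i-1]}$, carrying the datum $D^{i-1}_{\!B}[1-2i,2i-1]$ — exactly as the datum $D^{i-1}_{\!A}[2i]$ arises in type $A$. This is the $\FSBop$-module induced from the object $[1-2i,2i-1]$ along $\FSB$ with coefficients in the $W_{2i-1}$-representation $D^{i-1}_{\!B}[1-2i,2i-1]$, and its dimension at $[-n,n]$ grows like $\dim D^{i-1}_{\!B}[1-2i,2i-1]\cdot(4i-1)^n/|W_{2i-1}|$, since $|\Hom_{\FSB}([-n,n],[1-2i,2i-1])|\sim(4i-1)^n$ and $W_{2i-1}$ acts freely on all but finitely many of those morphisms. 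Dividing by $(4i-1)^n$ gives the first displayed equality, and the second is just $|W_{2i-1}|=2^{2i-1}(2i-1)!$.

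The main obstacle is the bookkeeping behind the last two paragraphs: propagating smallness degrees through induction along $\FSB$-morphisms and external tensor products so that the total lands on exactly $2i-1$ — neither $2i$ nor $2i-2$ — and isolating the single dominant $E_1$-term with its correct $W_{2i-1}$-multiplicity. This is where the combinatorics specific to type $B$ enters in an essential way, since a flat of $\cA_{[-n,n]}$ is a signed partition with one distinguished zero orbit and several free $\pm$-orbits, and the local geometry along it is a genuinely mixed product of type-$A$ and type-$B$ data; the argument has to be re-derived with this structure in mind rather than transported verbatim from \cite[Section 6]{fs-braid}.
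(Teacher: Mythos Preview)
Your high-level strategy---use the spectral sequence from \cite[Theorem 3.1]{fs-braid}, check that every $E_1$-entry is $(2i-1)$-small, then isolate the one term that actually realizes the top growth rate---is exactly what the paper does.  But your description of the $E_1$-page is off in a way that makes the rest of the argument harder than it needs to be and introduces an unnecessary (and, as stated, unfounded) induction.

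The spectral sequence has
\[
N(i,\cA)_1^{p,q} \;=\; \bigoplus_{\dim F=p} \OS^{2i-p-q}(\cA_F)\otimes \IH^{2(i-q)}(X_{\cA^F}).
\]
The intersection cohomology factor attaches to the \emph{contraction} $\cA^F$, not the localization.  For a flat $F$ of dimension $p$ in $\cA_{(E,\sigma)}$, the contraction is (up to isomorphism) the fixed arrangement $\cA_{[-p,p]}$, so $\IH^{2(i-q)}(X_{\cA^F})\cong D^{i-q}_{\!B}[-p,p]$ is a \emph{constant} vector space, independent of $(E,\sigma)$.  No modules $(D^b_{\!A})^*$ ever appear, and $(D^b_{\!B})^*$ never appears as a varying $\FSBop$-module---only as a scalar multiplicity.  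Consequently there is nothing to induct on: the smallness of the $E_1$-entries comes entirely from the Orlik--Solomon side.  The paper packages that side into the modules
\[
C_{p,j}(E,\sigma)=\bigoplus_{\varphi:(E,\sigma)\to[-p,p]}\OS^j\!\big((\cA_{(E,\sigma)})_{G_\varphi}\big),
\]
so that $N(i)_1^{p,q}\cong \big(C_{p,2i-p-q}\otimes D^{i-q}_{\!B}[-p,p]\big)^{W_p}$, and proves directly (Lemma~\ref{composition} and Proposition~\ref{C-small}, using only Propositions~\ref{OSA-prop} and~\ref{OSB-prop}) that $C_{p,j}^*$ is $(2j-1+p)$-small for $j>0$ and $p$-small for $j=0$.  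The case split $p+q=2i$ versus $p+q<2i$, together with the vanishing $D^{i-q}_{\!B}[-p,p]=0$ unless $(p,q)=(0,i)$ or $p>2(i-q)$, then pins the bound at exactly $2i-1$.

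One further point: your claim that the residue ``is computed from the $E_1$-page'' skips a step.  What is preserved from $E_1$ to $E_\infty$ is the \emph{alternating} sum $\sum_{p,q}(-1)^{p+q}r_{2i-1}$, not the plain sum.  The paper observes that $E_\infty$ is concentrated on the line $p+q=2i$, so the signs on $E_\infty$ are all equal and the alternating sum there coincides with the plain sum; only then can you pass back to $E_1$.  After that, the only $E_1$-entries not already $(2i-2)$-small are $N^*(i)_1^{2i-1,1}$ and $N^*(i)_1^{0,i}=(\OS^i_{\!B})^*$; the latter has $r_{2i-1}=0$ by Proposition~\ref{OSB-prop}, and the former gives the stated answer via $r_{2i-1}(P_{[1-2i,2i-1]})=1$ and the free $W_{2i-1}$-action, as you describe.
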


\vspace{\baselineskip}
\noindent
{\em Acknowledgments:}
This work benefited greatly from the efforts of Patrick Durkin, who helped to formulate 
the definition of $\FSB$ and wrote the first draft of the material in Section 3.
The author is also grateful to Eric Ramos for his valuable help and suggestions.

\section{Gr\"obner and \boldmath{$\O$}-lingual categories}
We begin by reviewing the relevant machinery from \cite{sam} that we will need to prove Theorems \ref{qGB} and \ref{B-small}.  
Let $\FC$ be an essentially small category.
Given morphisms $\varphi:x\to y$ and $\varphi':x\to y'$, we say $\varphi\leq \varphi'$
if there exists a morphism $\psi:y\to y'$ with $\varphi' = \psi\circ \varphi$.  If $\varphi\leq \varphi'\leq \varphi$, 
then $\varphi$ and $\varphi'$ are said to be {\bf equivalent}.
The poset of equivalence classes of morphisms out of $x$ is denoted $|\FC_x|$.  

We say that $\FC$ is {\bf directed} if it has no endomorphisms other than the identity maps.
We say that $\FC$ has {\bf property (G1)} if, for every object $x$,
there exists a well order $\prec$ on $\FC_x$ that with the property that $\varphi\prec \varphi' \Rightarrow \psi\circ \varphi\prec \psi\circ \varphi'$ whenever both compositions make sense.  We say that $\FC$ has {\bf property (G2)} if, for every object $x$, the poset $|\FC_x|$ is {\bf Noetherian}, meaning 
that every ideal (upwardly closed subset) has only finitely many minimal elements.
A directed category with properties (G1) and (G2) is called {\bf Gr\"obner}.

A functor $\Phi:\FC\to\FC'$ has {\bf property (F)} if, for any object $x$ of $\FC'$, there exist finitely many objects 
$y_1,\ldots,y_s$ of $\FC$ and morphisms
$\varphi_i:x\to \Phi(y_i)$ such that for any object $y$ of $\FC$ and any morphism
$\varphi:x \to \Phi(y)$ in $\FC$, there exists a morphism $\psi:y_i \to y$ in $\FC$ with $\varphi = \Phi(\psi)\circ \varphi_i$.
This definition is engineered precisely so that the following result will hold \cite[Propositions 3.2.3]{sam}.

\begin{proposition}\label{generators}
Suppose that $\Phi:\FC\to\FC'$ has property (F).
Suppose that $N\in\Rep_k(\FC')$ is finitely generated, with generating objects $x_1,\ldots,x_r$.
For each $1\leq i\leq r$, choose objects $y_{i1},\ldots,y_{is_i}$ of $\FC$ corresponding to $x_i$ as in the definition of property (F).
Then the module $\Phi^*N\in \Rep_k(\FC)$ is finitely generated, with generating objects $\{y_{ij}\mid 1\leq i\leq r, 1\leq j\leq s_i\}$.
\end{proposition}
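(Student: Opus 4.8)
The plan is to reduce to the case in which $N$ is a single principal projective and then to treat that case directly, using nothing more than the defining universal property of principal projectives together with property (F).

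First I would record two formal facts about the pullback functor $\Phi^*\colon \Rep_k(\FC')\to\Rep_k(\FC)$, which on an object $y$ of $\FC$ is given by $(\Phi^*N)(y)=N(\Phi(y))$: it is exact, because exactness of a sequence of functors is tested objectwise and $\Phi^*$ merely re-indexes the objectwise values, and it commutes with arbitrary direct sums, for the same reason. Since $N$ is finitely generated with generating objects $x_1,\ldots,x_r$, fix a surjection $\bigoplus_{i=1}^r P_{x_i}\twoheadrightarrow N$ in $\Rep_k(\FC')$; applying $\Phi^*$ produces a surjection $\bigoplus_{i=1}^r \Phi^*P_{x_i}\twoheadrightarrow \Phi^*N$ in $\Rep_k(\FC)$. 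It therefore suffices to prove: for a single object $x$ of $\FC'$, with associated objects $y_1,\ldots,y_s$ of $\FC$ and morphisms $\varphi_j\colon x\to\Phi(y_j)$ as in the definition of property (F), the module $\Phi^*P_x$ is finitely generated with generating objects $y_1,\ldots,y_s$. Taking the direct sum over $i$ of the resulting surjections $\bigoplus_j P_{y_{ij}}\twoheadrightarrow \Phi^*P_{x_i}$ and composing with the surjection above then exhibits $\{y_{ij}\}$ as a generating set for $\Phi^*N$.

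For this key case I would use the Yoneda identification $\Hom_{\Rep_k(\FC)}(P_y,M)\cong M(y)$. Because $(\Phi^*P_x)(y_j)=P_x(\Phi(y_j))=k[\Hom_{\FC'}(x,\Phi(y_j))]$, the basis element $\varphi_j$ determines a morphism $\alpha_j\colon P_{y_j}\to\Phi^*P_x$, which on an object $y$ sends the basis element $\psi\in\Hom_{\FC}(y_j,y)$ of $P_{y_j}(y)$ to $\Phi(\psi)\circ\varphi_j\in\Hom_{\FC'}(x,\Phi(y))=(\Phi^*P_x)(y)$. Let $\alpha=\bigoplus_{j=1}^s\alpha_j\colon\bigoplus_{j=1}^s P_{y_j}\to\Phi^*P_x$. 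I would check surjectivity objectwise: on an object $y$ the target $(\Phi^*P_x)(y)$ has $k$-basis the morphisms $\varphi\colon x\to\Phi(y)$, and property (F) supplies, for each such $\varphi$, an index $j$ and a morphism $\psi\colon y_j\to y$ with $\varphi=\Phi(\psi)\circ\varphi_j=\alpha_j(\psi)$; thus every basis element of the target lies in the image of $\alpha$, so $\alpha$ is surjective and $\Phi^*P_x$ is generated by $y_1,\ldots,y_s$.

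The argument is essentially bookkeeping, so there is no serious obstacle; the one point demanding care is to keep the variance conventions consistent --- that the principal projectives are the representable functors $P_x=k[\Hom_{\FC'}(x,-)]$, so that the morphisms $\varphi_j\colon x\to\Phi(y_j)$ of property (F) are exactly the data needed to define maps out of the $P_{y_j}$ after applying $\Phi^*$, and that $\Phi^*$ genuinely preserves the surjections and direct sums used in the reduction. With those fixed, the proof goes through as above.
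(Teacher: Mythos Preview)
Your argument is correct. The paper does not actually supply its own proof of this proposition; it simply cites \cite[Proposition 3.2.3]{sam} for it, so there is nothing to compare against directly. That said, your reduction to a single principal projective via exactness and additivity of $\Phi^*$, followed by the Yoneda-based construction of the surjection $\bigoplus_j P_{y_j}\twoheadrightarrow\Phi^*P_x$ whose objectwise surjectivity is exactly the content of property (F), is precisely the standard argument one finds in the Sam--Snowden reference, with the variance conventions handled correctly throughout.
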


The category $\FC'$ is called {\bf quasi-Gr\"obner} if there exists a Gr\"obner category $\FC$ and an essentially surjective functor
$\Phi:\FC\to\FC'$ with property (F).  In this case, the category $\FC$ is said to be a {\bf Gr\"obner cover} of $\FC'$.
Sam and Snowden use Proposition \ref{generators} to prove the following result \cite[Theorem 4.3.2]{sam}.

\begin{theorem}\label{SSqG}
If $\FC'$ is quasi-Gr\"obner and $k$ is a left Noetherian ring, 
then any submodule of a finitely generated $\FC'$-module over $k$ is itself finitely generated.
\end{theorem}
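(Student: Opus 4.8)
The plan is to reduce to the case where $\FC'$ itself is Gr\"obner and then run the classical Gr\"obner-basis argument, so first I would unwind the quasi-Gr\"obner hypothesis. Fix a Gr\"obner category $\FC$ and an essentially surjective functor $\Phi:\FC\to\FC'$ with property (F), and let $N'\subseteq N$ with $N$ finitely generated. Since $\Phi^*$ is evaluation-wise reindexing it is exact, so $\Phi^*N'\subseteq\Phi^*N$, and $\Phi^*N$ is finitely generated over $\FC$ by Proposition \ref{generators}. Granting the Gr\"obner case below, $\Phi^*N'$ is finitely generated, say by $v_1,\dots,v_r$ with $v_j\in(\Phi^*N')(y_j)=N'(\Phi(y_j))$. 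Because $\Phi$ is essentially surjective and the $\FC'$-action on the $v_j$ agrees through $\Phi$ with the $\FC$-action witnessing their generation of $\Phi^*N'$, the $v_j$ generate $N'$ as an $\FC'$-module: any $w\in N'(z)$ lies in $N'(\Phi(y))$ for some $y$ with $\Phi(y)\cong z$, and an expression of $w$ by morphisms of $\FC$ pushes forward to one by their images in $\FC'$. Hence it is enough to prove the statement for Gr\"obner $\FC'$.

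\textbf{Step 2 (d\'evissage to one principal projective).} Next I would peel off the principal projectives. A finitely generated $M\in\Rep_k(\FC)$ is a quotient $\pi:\bigoplus_{i=1}^{r}P_{x_i}\to M$, and for $M'\subseteq M$ the submodule $\pi^{-1}(M')\subseteq\bigoplus_{i=1}^{r}P_{x_i}$ surjects onto $M'$, so it suffices to finitely generate submodules of finite direct sums of principal projectives. Inducting on $r$: for $L\subseteq\bigoplus_{i=1}^{r}P_{x_i}$, projection onto the last factor gives a short exact sequence $0\to L\cap\bigoplus_{i=1}^{r-1}P_{x_i}\to L\to L''\to 0$ with $L''\subseteq P_{x_r}$ and $L\cap\bigoplus_{i=1}^{r-1}P_{x_i}$ finitely generated by induction; since an extension of a finitely generated module by a finitely generated module is finitely generated, everything reduces to the claim that every submodule of a single $P_x$ is finitely generated.

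\textbf{Step 3 (Gr\"obner bases in $P_x$).} This is the heart. Recall $P_x(y)$ is the free $k$-module on the set $\FC_x(y)$ of morphisms $x\to y$, with module structure given by postcomposition, and that (G1) supplies a well order $\prec$ on $\FC_x$ compatible with postcomposition. For $0\neq v\in M'(y)$ let $\init(v)\in\FC_x(y)$ be the $\prec$-largest morphism occurring in $v$ and $c(v)\in k$ its coefficient. Using the compatibility in (G1) one checks that postcomposing $v$ with $\psi:y\to y'$ carries the leading term to $\psi\circ\init(v)$ with the same coefficient and no cancellation against lower terms, so that the set $\init(M')$ of all leading terms descends to an ideal of the poset $|\FC_x|$, which by (G2) has finitely many minimal classes. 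If $k$ were a field one would now pick witnesses $v_\ell\in M'$ with $\init(v_\ell)$ in these classes and divide, terminating because $\prec$ is a well order. For general left Noetherian $k$ one instead records, for each class $\overline{f}\in|\FC_x|$, the left ideal $L_{\overline{f}}\subseteq k$ of leading coefficients of elements of $M'$ with leading term in $\overline{f}$ (with $0$ adjoined), notes that $L_{\overline{f}}\subseteq L_{\overline{f}'}$ when $\overline{f}\le\overline{f}'$, and combines the Noetherianity of $|\FC_x|$ with that of $k$ to extract finitely many pairs $(\overline{f}_1,c_1),\dots,(\overline{f}_N,c_N)$ with $c_\ell\in L_{\overline{f}_\ell}$ and $L_{\overline{f}}\subseteq\sum_{\overline{f}_\ell\le\overline{f}}k\,c_\ell$ for all $\overline{f}$. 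Lifting these to a finite ``Gr\"obner basis'' $v_1,\dots,v_N\in M'$, one checks that every $0\neq v\in M'$ can be reduced: postcomposing suitable $v_\ell$ and subtracting a $k$-linear combination lowers $\init(v)$ in $\prec$, so after finitely many steps $v$ lands in the submodule generated by the $v_\ell$. Thus $M'$ is finitely generated.

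\textbf{Main obstacle.} Steps 1 and 2 are formal. The hard part will be the passage in Step 3 from a field to a general left Noetherian ring: since a leading coefficient need not be a unit, the division algorithm must be run against the nested left ideals $L_{\overline{f}}$, and producing the finite list $(\overline{f}_\ell,c_\ell)$ demands the precise interplay between the Noetherianity of the poset $|\FC_x|$ (property (G2)) and the Noetherianity of $k$ --- in effect a categorified Hilbert basis theorem. That is the only place the hypothesis ``$k$ left Noetherian'' is used, and it is where the real care is required.
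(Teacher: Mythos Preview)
The paper does not actually prove this theorem: it is quoted verbatim from Sam and Snowden \cite[Theorem 4.3.2]{sam}, with the single remark that Proposition~\ref{generators} is the tool used there to pass from a Gr\"obner cover to the quasi-Gr\"obner category. So there is no ``paper's own proof'' to compare against beyond that citation.

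That said, your sketch is exactly the Sam--Snowden argument, and the outline is correct. Step~1 is the content of the cited reduction via Proposition~\ref{generators} (and your observation that essential surjectivity of $\Phi$ lets generators of $\Phi^*N'$ serve as generators of $N'$ is the point). Step~2 is standard. In Step~3 your use of (G1) is right: the strict compatibility $\varphi\prec\varphi'\Rightarrow\psi\circ\varphi\prec\psi\circ\varphi'$ forces postcomposition to be injective on $\FC_x$, so leading terms never collide, and $\{[\init(v)]:0\neq v\in M'\}$ is an ideal of $|\FC_x|$. Your self-diagnosis of the main obstacle is also accurate: the passage from a field to a left Noetherian ring really does require the ``categorified Hilbert basis theorem'' you describe, namely that a monotone assignment $[\,f\,]\mapsto L_{[f]}$ of left ideals of $k$ indexed by the Noetherian poset $|\FC_x|$ is globally controlled by finitely many pairs $([\,f_\ell\,],c_\ell)$. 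This is precisely the lemma Sam and Snowden isolate, and it is where both hypotheses (G2) and ``$k$ left Noetherian'' are consumed. Filling in that lemma carefully (e.g.\ by an induction showing the process of extracting minimal classes where $L_{[f]}$ strictly grows must terminate) is the only real work remaining in your sketch.
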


Given a finite set $\Sigma$, we denote the set of words (finite sequences) in $\Sigma$ by $\Sigma^\star$.  A {\bf language} on $\Sigma$ is a subset of 
$\Sigma^\star$.
Given two languages $\cL_1$ and $\cL_2$ on $\Sigma$, their {\bf concatenation} is the set of sequences formed by concatenating a word
in $\cL_1$ and a word in $\cL_2$.  The set of {\bf ordered languages} on $\Sigma$ is the smallest collection of languages on $\Sigma$ that contains 
singleton languages and languages of the form $\Pi^\star$ for $\Sigma\subset \Sigma$ and is closed under finite unions and concatenations.

A {\bf norm} on $\FC$ is a function $\nu$ from the set of isomorphism classes of objects of $\FC$ to the natural numbers.
The normed category $\FC$ is said to be {\bf \boldmath{$\O$}-lingual} if, for every object $x$ of $\FC$, there exists
a finite set $\Sigma_x$ and an inclusion $\iota_x:|\FC_x|\to\Sigma_x^\star$ satisfying the following two properties:
\begin{itemize}
\item For any $\varphi:x\to y$, $\iota_x(\varphi)$ is a word of length $\nu(y)$.
\item For any ideal $I\subset |\FC_x|$, $\iota_x(I)\subset \Sigma_x^\star$ is an ordered language.
\end{itemize}
The final result that we will need is the following, which is proved in \cite[Corollary 5.3.8 and Theorem 6.3.2]{sam} (see also Corollary 8.1.4). 

\begin{theorem}\label{lingual}
Suppose that $\FC$ is endowed with a norm and an $\O$-lingual structure, $k$ is a field, and $N$ is an $\FC$-module over $k$
that is generated by the objects $x_1,\ldots,x_r$.  Let $m := \max\{|\Sigma_{x_i}|\}$ and
$$H_{\FC}(N; t) := \sum_x t^{\nu(x)}\dim N(x),$$ where the sum is over isomorphism classes of objects.
Then $H_{\FC}(N; t)$ is a rational function whose poles are contained in the set $\{1/j\mid 1\leq j\leq m\}$.
\end{theorem}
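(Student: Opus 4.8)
The plan is to reduce the assertion to a purely combinatorial fact about ordered languages, and to prove that fact by structural induction on the way such a language is built.

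\emph{Step 1: from modules to ideals.} Since $N$ is generated by $x_1,\dots,x_r$, fix a surjection $\pi\colon F\to N$ from a finite direct sum $F=\bigoplus_{i=1}^{r}P_{x_i}$ of principal projective $\FC$-modules, and set $K:=\ker\pi$. Using the well order on each $\FC_{x}$ supplied by the $\O$-lingual structure (or, if $\FC$ is not directed, after passing to a Gr\"obner cover, which changes neither the objects up to norm nor the Hilbert series), form the initial submodule $\init(K)\subseteq F$, spanned object-by-object by the leading monomials of the elements of $K$. The standard Gr\"obner argument gives $\dim N(y)=\dim F(y)-\dim K(y)=\dim\big(F/\init(K)\big)(y)$ for every object $y$, hence $H_\FC(N;t)=H_\FC\big(F/\init(K);t\big)$. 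Now $\init(K)=\bigoplus_i M_i$ with each $M_i\subseteq P_{x_i}$ a \emph{monomial} submodule, i.e. corresponding to an ideal $I_i\subseteq|\FC_{x_i}|$, so that $(P_{x_i}/M_i)(y)$ has as a basis the set of morphisms $x_i\to y$ whose class lies outside $I_i$. Writing $f_{\cL}(t):=\sum_{w\in\cL}t^{|w|}$ for a language $\cL$, and using that $\iota_{x_i}$ is injective and sends a morphism $x_i\to y$ to a word of length $\nu(y)$, we obtain
$$H_\FC(N;t)=\sum_{i=1}^{r}\Big(f_{\,\iota_{x_i}(|\FC_{x_i}|)}(t)-f_{\,\iota_{x_i}(I_i)}(t)\Big).$$
Both $|\FC_{x_i}|$ and $I_i$ are ideals of $|\FC_{x_i}|$, so $\iota_{x_i}(|\FC_{x_i}|)$ and $\iota_{x_i}(I_i)$ are ordered languages on the alphabet $\Sigma_{x_i}$, which has size at most $m$.

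\emph{Step 2: generating functions of ordered languages.} It now suffices to prove that whenever $\cL\subseteq\Sigma^\star$ is an ordered language with $|\Sigma|=p$, the series $f_{\cL}(t)$ is a rational function with poles contained in $\{1/j\mid 1\le j\le p\}$; applying this with $p\le m$ to each term in the display above then finishes the proof. I would induct on the construction of $\cL$. The base cases are immediate: a singleton $\{w\}$ gives the polynomial $t^{|w|}$, and $\Pi^\star$ with $|\Pi|=q\le p$ gives $\sum_{k\ge0}q^{k}t^{k}=1/(1-qt)$. The inductive steps—closure under finite union and concatenation—are where the difficulty lies, since neither operation is unambiguous: a word of $\cL_1\cdot\cL_2$ may factor in several ways, and a word may belong to both $\cL_1$ and $\cL_2$, so one cannot naively multiply or add generating functions. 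The way around this is a normal-form lemma: every ordered language is a \emph{finite disjoint} union of ``elementary'' languages $w_0\Pi_1^\star w_1\cdots\Pi_\ell^\star w_\ell$, chosen so that each such concatenation is unambiguous; one proves this together with the statement that the class of finite disjoint unions of elementary languages is closed under union, concatenation, and intersection. Granting it, $f_{w_0\Pi_1^\star w_1\cdots\Pi_\ell^\star w_\ell}(t)=t^{\sum_j|w_j|}\prod_{j=1}^{\ell}(1-|\Pi_j|t)^{-1}$ has poles only at the points $1/|\Pi_j|$ with $|\Pi_j|\le p$, and $f_{\cL}$ is a finite sum of such functions, completing the induction.

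\emph{Main obstacle.} The real work is the normal-form lemma for ordered languages in Step 2, and in particular the verification that the intersection of two elementary languages again decomposes into finitely many elementary pieces; this is a combinatorial alignment argument on words. By contrast, Step 1 is the routine Gr\"obner-basis formalism for combinatorial categories, and the final assembly is bookkeeping, so once Step 2 is established the theorem follows.
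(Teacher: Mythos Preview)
The paper does not give its own proof of this theorem; it simply cites \cite[Corollary 5.3.8 and Theorem 6.3.2]{sam}.  Your sketch is essentially the Sam--Snowden argument: reduce via initial submodules to monomial ideals in the principal projectives, translate those to ideals in $|\FC_{x_i}|$ and hence (by the $\O$-lingual hypothesis) to ordered languages on $\Sigma_{x_i}$, and then prove the purely combinatorial statement that an ordered language on a $p$-letter alphabet has generating function rational with poles in $\{1/j\mid 1\le j\le p\}$.  Your identification of the normal-form lemma for ordered languages as the crux is accurate, and your proposed route (decompose into unambiguous elementary pieces $w_0\Pi_1^\star w_1\cdots\Pi_\ell^\star w_\ell$, checking closure under intersection) is the standard one.

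One small correction: your parenthetical about ``passing to a Gr\"obner cover'' if $\FC$ is not directed does not work as stated, since the $\O$-lingual structure lives on $\FC$ and there is no reason it should transfer to an arbitrary cover, nor that a cover preserve Hilbert series.  In Sam--Snowden the $\O$-lingual hypothesis is only ever invoked for categories already assumed Gr\"obner (hence directed, with property (G1) available), and the paper's only application is to $\OSBop$, which is Gr\"obner by Proposition \ref{grobling}.  So you should simply assume directedness and (G1) outright, pull back the lexicographic order along $\iota_x$ to get the admissible well-order, and drop the parenthetical; then Step~1 goes through exactly as you wrote it.
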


\section{Ordered surjections}
The purpose of this section is to prove theorems \ref{qGB} and \ref{B-small}.
We proceed by constructing a category $\OSB$ such that $\OSBop$ is an $\O$-lingual Gr\"obner cover of $\FSBop$.
The objects of $\OSB$ will be pairs $(E,\sigma)$, where $E$ is a totally ordered finite set and $\sigma$ is an order-reversing involution
with a unique fixed point.  We will denote the fixed point by $0$, and we will write $-e:=\sigma(e)$ for any $e\in E$.
Let
$$E^+ := \{e\in E\mid e>0\}\and E^- := \{e\in E\mid e<0\},$$
so that $$E = E^- \sqcup \{0\} \sqcup E^+.$$
For any element $e\in E$, we will write $|e| := \max\{\pm e\}$.  For any subset $D\subset E$, we will write $\init D := \min\{|e|\mid e\in S\}$.
A morphism from $(E_1,\sigma_1)$ to $(E_2,\sigma_2)$ in $\OSB$ will be a surjective map $\varphi:E_1\to E_2$ with 
$\varphi\circ\sigma_1=\sigma_2\circ \varphi$ along with the following two additional properties:
\begin{itemize}
\item[(i)] For all $e\in E_2^+$, $\init \varphi^{-1}(e) \in \varphi^{-1}(e)$.
\item[(ii)] For all $e<f\in E_2^+$, $\init \varphi^{-1}(e) < \init\varphi^{-1}(f)$.
\end{itemize}
The following lemma says that composition in $\OSB$ is well defined.

\begin{lemma}
If the maps $\varphi:(E_1,\sigma_1)\to(E_2,\sigma_2)$ and $\psi:(E_2,\sigma_2)\to(E_3,\sigma_3)$ each have
properties (i) and (ii), then so does the composition $\psi\circ\varphi:(E_1,\sigma_1)\to(E_3,\sigma_3)$.
\end{lemma}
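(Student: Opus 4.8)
The plan is to repackage properties (i) and (ii) into a single piece of data. For a morphism $\varphi:(E_1,\sigma_1)\to(E_2,\sigma_2)$ with properties (i) and (ii) and any $e\in E_2^+$, I would set $m_\varphi(e):=\init\varphi^{-1}(e)$. Since $e\ne 0$, the fiber $\varphi^{-1}(e)$ is disjoint from $\varphi^{-1}(-e)=\sigma_1(\varphi^{-1}(e))$, so no two elements of $\varphi^{-1}(e)$ have the same absolute value; combined with property (i), this forces $m_\varphi(e)\in E_1^+$ with $\varphi(m_\varphi(e))=e$, and in fact $m_\varphi(e)$ is the least positive element of $\varphi^{-1}(e)$. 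Property (ii) then says exactly that $m_\varphi\colon E_2^+\to E_1^+$ is strictly order-preserving, and conversely these two facts recover (i) and (ii). So, writing $\theta:=\psi\circ\varphi$ (clearly a surjection with $\theta\circ\sigma_1=\sigma_3\circ\theta$), it suffices to produce an $m_\theta$ with these properties, and I would aim to prove $m_\theta=m_\varphi\circ m_\psi$.

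The crux is the identity $\init\theta^{-1}(g)=m_\varphi\big(m_\psi(g)\big)$ for every $g\in E_3^+$. The first ingredient is that $\init$ of a subset of $E_1$ depends only on absolute values, so for any nonzero $f\in E_2$ one has $\init\varphi^{-1}(f)=\init\varphi^{-1}(|f|)=m_\varphi(|f|)$, using $\varphi^{-1}(-f)=\sigma_1(\varphi^{-1}(f))$. The second is that $\theta^{-1}(g)$ is the disjoint union of the fibers $\varphi^{-1}(f)$ over $f\in\psi^{-1}(g)$, and $g\ne 0$ makes every such $f$ nonzero, so
$$\init\theta^{-1}(g)=\min_{f\in\psi^{-1}(g)}\init\varphi^{-1}(f)=\min_{f\in\psi^{-1}(g)}m_\varphi(|f|).$$
Since $m_\varphi$ is strictly order-preserving it commutes with minima of finite nonempty subsets of $E_2^+$, so this equals $m_\varphi\big(\min_{f\in\psi^{-1}(g)}|f|\big)=m_\varphi\big(\init\psi^{-1}(g)\big)=m_\varphi\big(m_\psi(g)\big)$.

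Granting the identity, the conclusion should follow formally. Property (i) of $\psi$ puts $m_\psi(g)$ in $E_2^+$, and then property (i) of $\varphi$ puts $m_\varphi(m_\psi(g))$ in $E_1^+$, with $\theta\big(m_\varphi(m_\psi(g))\big)=\psi\big(\varphi(m_\varphi(m_\psi(g)))\big)=\psi(m_\psi(g))=g$; hence $\init\theta^{-1}(g)=m_\varphi(m_\psi(g))\in\theta^{-1}(g)$, which is property (i) for $\theta$. For property (ii), if $g<h$ in $E_3^+$ then property (ii) of $\psi$ gives $m_\psi(g)<m_\psi(h)$ in $E_2^+$, and property (ii) of $\varphi$ then gives $m_\varphi(m_\psi(g))<m_\varphi(m_\psi(h))$, i.e.\ $\init\theta^{-1}(g)<\init\theta^{-1}(h)$.

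The one place that needs care, rather than the formal bookkeeping, is the reformulation of property (i): one must observe that a fiber over a \emph{positive} element of the target meets each $\pm$-pair in at most one point, so that $\init$ of such a fiber is realized by an honest least positive preimage and not merely by some element of minimal absolute value. Once that is pinned down, nothing in the two-step fibering argument presents a real obstacle.
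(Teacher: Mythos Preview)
Your proof is correct and proves exactly the same key identity as the paper, namely $\init(\psi\circ\varphi)^{-1}(g)=\init\varphi^{-1}\big(\init\psi^{-1}(g)\big)$, from which properties (i) and (ii) for the composite follow immediately. The only difference is cosmetic: the paper establishes the identity by chasing two inequalities between $e_1:=\init\varphi^{-1}(\init\psi^{-1}(e_3))$ and $f_1:=\init(\psi\circ\varphi)^{-1}(e_3)$, whereas you decompose $\theta^{-1}(g)$ into $\varphi$-fibers and push the minimum through using monotonicity of your section map $m_\varphi$; both arguments use property (ii) of $\varphi$ at the same essential step.
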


\begin{proof}
It will suffice to check that, for all $e_3\in E_3^+$, the elements
$$e_1 := \init\varphi^{-1}\!\left(\init\psi^{-1}(e_3)\right)\and f_1 := \init (\psi\circ\varphi)^{-1}(e_3)$$ coincide.
Let $e_2:=\varphi(e_1)$ and $f_2:=\varphi(f_1)$.  Property (i) for $\varphi$ tells us that $e_2 = \init\psi^{-1}(e_3)$
and property (i) for $\psi$ tells us that $\psi(e_2) = e_3$.  Thus $(\psi\circ\varphi)(e_1) = e_3$, and therefore
$$f_1 = \init(\psi\circ\varphi)^{-1}(e_3) \leq e_1.$$
We have $\psi(f_2) = (\psi\circ\varphi)(f_1) \in \{\pm e_3\}$, 
therefore
$$e_2 = \init\psi^{-1}(e_3) = \init\psi^{-1}(\pm e_3)\leq |f_2|.$$
Applying property (ii) for $\varphi$, we find that
$$e_1 = \init\varphi^{-1}(e_2)\leq \init\varphi^{-1}(|f_2|)
= \init\varphi^{-1}(f_2) \leq f_1.$$
This completes the proof that $e_1=f_1$.
\end{proof}

Every object of $\OSB$ is isomorphic to $[-n,n]$ for some natural number $n$, and that there are no nontrivial endomorphisms.
In particular, $\OSB$ is essentially small and directed.  
Let $\Phi:\OSBop\to\FSBop$ be the forgetful functor.

\begin{lemma}\label{F}
The functor $\Phi:\OSBop\to\FSBop$ has property (F).
\end{lemma}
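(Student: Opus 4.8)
The plan is to unwind the definition of property (F) for the forgetful functor $\Phi:\OSBop\to\FSBop$. Fix an object $x=(E,\sigma)$ of $\FSBop$; since $\Phi$ is the identity on the underlying data, $x$ is also an object of $\OSB$, but we must remember that in $\FSBop$ a morphism $x\to y$ really means a morphism $y\to x$ in $\FSB$, i.e.\ a surjection $E'\to E$ intertwining the involutions, where $y=(E',\sigma')$. So I need to produce finitely many objects $y_1,\dots,y_s$ of $\OSB$ together with morphisms $\varphi_i:x\to\Phi(y_i)$ in $\FSBop$ — equivalently $\FSB$-surjections $y_i\to x$ — such that every $\FSB$-surjection $\psi:y\to x$ (for arbitrary $y\in\OSB$) factors as $\psi=\varphi_i\circ(\text{an }\OSB\text{-morphism }y_i\to y)$, i.e.\ in $\FSBop$ we have $\varphi=\Phi(\eta)\circ\varphi_i$ for some $\OSB$-morphism $\eta:y_i\to y$.

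The natural candidates for the $y_i$ are the objects of $\OSB$ obtained by equipping $E$ with each of the (finitely many) total orders making $\sigma$ order-reversing; such an order is determined by an arbitrary linear order on $E^+$, so there are $n!$ of them where $n=|E^+|$, and I take $\varphi_i=\id_E$ as the $\FSB$-surjection $y_i\to x$. The key step is then: given any $\OSB$-object $y=(E',\sigma')$ and any $\FSB$-surjection $\psi:E'\to E$ (intertwining involutions), I must find an index $i$ and a genuine $\OSB$-morphism $\eta:y_i\to y$ — that is, an order-preserving-on-positives surjection $E\to E'$ satisfying conditions (i) and (ii) with respect to the order chosen on $y_i$ — with $\psi\circ\eta=\id_E$. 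This forces $\eta$ to be a section of $\psi$. The recipe is standard: for each $e\in E^+$ choose $\eta(e)\in\psi^{-1}(e)$ by insisting that $|\eta(e)|=\init\psi^{-1}(e)$ and that $\eta(e)$ has the correct sign so that $\init\psi^{-1}(e)\in\psi^{-1}(\eta(e))$ — note $\psi$ maps $\init\psi^{-1}(e)$ to $\pm e$, so exactly one of $\pm\init\psi^{-1}(e)$ lies in $\psi^{-1}(e)$; set $\eta(e)$ to be that element. Extend by $\eta(0)=0$ and $\eta(-e)=-\eta(e)$. This $\eta$ is injective by construction and $\sigma$-equivariant; it is a section of $\psi$, so $\psi\circ\eta=\id_E$ as required. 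Finally, $\eta$ is surjective as a map $E\to E'$ only if $\eta$ is a bijection, which generally fails — so I should instead regard $\eta$ the other way: property (F) asks for a morphism $\psi':y_i\to y$ in $\OSB$, i.e.\ a surjection $E\to E'$; what I have constructed is a surjection going the wrong way. I will therefore reverse roles and build the required $\OSB$-surjection $E\to E'$ directly: this is automatic since every object of $\OSB$ with underlying set $E$ admits, for each order on $E'$-side\ldots

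Let me restate the real argument cleanly. Take $y_1,\dots,y_s$ to be all $\OSB$-objects with underlying set $E$ (one for each admissible order), with $\varphi_i:=\id$ viewed as an $\FSB$-surjection $\Phi(y_i)=x\to x$. Wait — that gives $\varphi_i$ the identity, which is fine. Given $\psi:y\to x$ in $\FSB$ (surjection $E'\to E$), define the order on $E'$ by pulling back: declare $a<b$ in $E'^+$ iff $\init\psi^{-1}(\psi(a))<\init\psi^{-1}(\psi(b))$, breaking ties within a fiber by a fixed rule, so that $\psi$ itself becomes an $\OSB$-morphism $y'\to y_i$ for the appropriate $i$ determined by the induced order on $E$. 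Then in $\FSBop$, $\psi$ (as an arrow $x\to\Phi(y)$) equals $\Phi(\psi)\circ\varphi_i$ with $\psi$ now an $\OSB$-morphism $y_i\to y$. Verifying that this pulled-back order makes $\psi$ satisfy (i) and (ii) is the one place real checking is needed, and it is the expected main obstacle: one must confirm that $\init\psi^{-1}(e)$ lands in the right fiber and that the induced order on initial elements is strictly monotone — but this is exactly the content already exploited in the composition lemma above, so it goes through. The finiteness is clear since there are only finitely many admissible orders on $E$, completing the proof that $\Phi$ has property (F).
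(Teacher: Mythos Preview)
Your overall strategy is the same as the paper's: take the $y_i$ to be the finitely many $\OSB$-objects whose underlying $\FSB$-object is $x=(E,\sigma)$, and set each $\varphi_i=\id_E$. What goes wrong is the bookkeeping of directions, and this derails both of your attempts.

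Unwind property (F) once more. A morphism $y_i\to y$ in $\OSBop$ is an $\OSB$-morphism $y\to y_i$. With $\varphi_i=\id_E$, the required factorization $\varphi=\Phi(\psi)\circ\varphi_i$ in $\FSBop$ simply says that the underlying set map of $\psi$ equals $\varphi:E'\to E$. So the task is: given the $\FSB$-surjection $\varphi:E'\to E$ and the \emph{given} order on $E'$ (which comes packaged with $y\in\OSB$), choose an order on $E$ so that $\varphi$ satisfies (i) and (ii). Your first attempt instead seeks an $\OSB$-morphism $y_i\to y$, i.e.\ a surjection $E\to E'$; that is why you are forced to build a section and then run into cardinality trouble. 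Your second attempt tries to ``define the order on $E'$,'' but $E'$ already carries its order from $y$, and your proposed formula uses $\init$ on fibers in $E'$, which presupposes that very order and is therefore circular. The phrase ``induced order on $E$'' at the end is pointing at the right object, but you never actually define it.

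Once the direction is straightened out the proof is a one-liner. For each nonzero orbit $\{e,\sigma(e)\}\subset E$, set $a_e:=\init\varphi^{-1}(\{e,\sigma(e)\})\in(E')^+$; declare $e$ to be positive when $a_e\in\varphi^{-1}(e)$, and order the positive elements of $E$ by $e<f\iff a_e<a_f$. Conditions (i) and (ii) then hold by construction, so $\varphi$ is an $\OSB$-morphism $y\to y_i$ for this choice of $i$. (The paper's proof phrases the reduction slightly differently and invokes a weakly order-preserving lift; the content is the same once the roles of source and target are sorted out.)
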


\begin{proof}
Unpacking the definition of property (F), we see that is is sufficient to show that, for any morphism $\varphi:(E_1,\sigma_1)\to (E_2,\sigma_2)$
in $\FSB$ and any total order of $E_2$ compatible with $\sigma_2$, there is a total order of $E_1$ compatible with $\sigma_1$
such that $\varphi$ is a morphism in $\OSB$.  Indeed, it is clear that
we can choose a total order on $E_1$, compatible with $\sigma_1$, 
with the even stronger condition that $\varphi$ is weakly order preserving.
\end{proof}

For each object $(E,\sigma)$ of $\OSB$, we define a poset structure on $E^\star$ by putting $e_1\cdots e_m\leq f_1\cdots f_n$
if there is a strictly increasing map $\theta:[m]\to[n]$ satisfying the following two conditions:
\begin{itemize}
\item For all $i\in [m]$, $e_i = f_{\theta(i)}$.
\item For all $j\in[n]$, there exists $i\in [r]$ such that $\theta(i)\leq j$ and $f_{\theta(i)}\in \{\pm f_j\}$.
\end{itemize}
In plain English, we require that $e_1\cdots e_m$ is a subword of $f_1\cdots f_n$, and that this subword contains the first occurrence of every $\sigma$ 
orbit appearing in $f_1\cdots f_n$.

\begin{proposition}\label{noetherian}
For any object, $(E,\sigma)$ of $\OSB$, the poset $E^\star$ is Noetherian.
\end{proposition}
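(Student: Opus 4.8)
The plan is to reduce Proposition \ref{noetherian} to a Higman-type lemma on subword orderings. Recall that by Higman's lemma, if $\Sigma$ is a finite set, then the set $\Sigma^\star$ of words with the ordinary subword ordering (where $u \leq v$ iff $u$ is a subsequence of $v$) is a well-quasi-order, hence Noetherian as a poset. The obstacle here is that the ordering on $E^\star$ in the statement is \emph{not} the plain subword order: it carries the extra constraint that the subword $e_1\cdots e_m$ must contain the first occurrence (reading left to right) of every $\sigma$-orbit that appears anywhere in $f_1\cdots f_n$. So I need a version of Higman's lemma that accommodates this ``first occurrence'' bookkeeping.

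First I would set up an auxiliary alphabet that records, for each letter, whether it is the first appearance of its $\sigma$-orbit in the word so far. Concretely, for a word $w = f_1 \cdots f_n \in E^\star$, define its \textbf{marked form} $\hat w \in \hat\Sigma^\star$ over the finite alphabet $\hat\Sigma := E \times \{\text{new}, \text{old}\}$, where the $j$-th letter of $\hat w$ is $(f_j, \text{new})$ if no $f_i$ with $i < j$ lies in $\{\pm f_j\}$, and $(f_j, \text{old})$ otherwise. The map $w \mapsto \hat w$ is injective (one recovers $w$ by forgetting the second coordinate). The key compatibility claim is: if $\hat u \leq \hat v$ in the \emph{plain} subword order on $\hat\Sigma^\star$ \emph{and} every ``new''-marked letter of $\hat v$ is hit by the subword embedding, then $u \leq v$ in the sense of the proposition. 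The converse also holds by construction. This is a routine unwinding of the two bulleted conditions.

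Next I would invoke the standard strengthening of Higman's lemma for ``labelled'' subword orders: given a wqo, the set of words is wqo under the subword order where one additionally demands the embedding cover all letters carrying a distinguished label, provided only finitely many labels of that distinguished type can occur — but here the subtlety is that an embedding witnessing $u \leq v$ in our order need not cover \emph{all} letters of $v$, only the ``new'' ones. The cleanest route is: decompose an arbitrary word $v$ by cutting it at each ``new'' letter, so $v = s_0 \, b_1 \, s_1 \, b_2 \, s_2 \cdots b_k \, s_k$ where $b_1, \dots, b_k$ are exactly the first occurrences (in order) and each $s_j \in (E^-)^\star$-type block contains only ``old'' letters. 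Since $E$ is finite, there are only finitely many possible values of $b_j$, and the blocks $s_j$ range over $E^\star$. Then our order on $E^\star$ corresponds, under this decomposition, to the product order on sequences $(b_1, s_0s_1\cdots)$ refined appropriately — and I can feed this to Higman's lemma applied to the alphabet $E$ together with Dickson-type finiteness for the $b_j$'s, concluding that $E^\star$ with our order is a wqo.

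\textbf{The main obstacle} I anticipate is getting the bookkeeping exactly right in the second and third steps: the ``first occurrence'' condition is a \emph{global} condition on $v$ (it depends on which orbits appear anywhere in $v$), and making this interact correctly with a Higman-style inductive or minimal-bad-sequence argument requires care — a naive application of Higman's lemma to $\hat\Sigma^\star$ gives the plain subword order, which is \emph{coarser} than what we want (ours has fewer comparable pairs), so it does not immediately imply our order is a wqo. The fix is precisely the block decomposition: I would run the classical minimal-bad-sequence argument directly on $E^\star$ with our order, where at each stage one extracts the leading ``new'' letter $b_1$ (finitely many choices, so an infinite subsequence shares it) and the remainder still carries a compatible order on the shorter words, so the usual descent-to-a-smaller-bad-sequence contradiction goes through. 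Once that combinatorial core is in place, translating ``wqo'' to ``every ideal has finitely many minimal elements'' (the paper's definition of Noetherian) is immediate, since a wqo has no infinite antichains and no infinite strictly descending chains.
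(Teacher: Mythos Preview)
Your block-decomposition idea is correct and, carried out properly, gives a proof genuinely different from the paper's. The key fact you are circling (and should state and check explicitly) is that two words comparable in this order must have the \emph{identical} tuple of first occurrences $(b_1,\ldots,b_k)$: same set of $\sigma$-orbits, same relative order, and the same representative at each first occurrence. (This holds because any witnessing embedding $\theta$ is forced to send the first occurrence of each orbit in the source to the first occurrence of that orbit in the target.) Hence $E^\star$ is a finite disjoint union of pairwise incomparable strata indexed by these tuples, and on each stratum, writing $w=b_1s_1b_2s_2\cdots b_ks_k$, the order is exactly the product of ordinary subword orders on the blocks $(s_1,\ldots,s_k)$. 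Higman's lemma together with closure of well-quasi-orders under finite products and finite disjoint unions then finishes the argument.

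However, the execution you describe in your final paragraph---a Nash--Williams minimal-bad-sequence argument that extracts the \emph{first} letter---has a real gap. If $v_j$ denotes $w_{i_j}$ with its first letter removed, then $v_j \not\leq w_{i_j}$ in this order: the first-occurrence condition at position $1$ of $w_{i_j}$ demands some $\theta(i)\leq 1$ in the image of the embedding, which is impossible since the embedding of $v_j$ lands only in positions $\geq 2$. Consequently, from $w_a\leq v_j$ (with $a<i_1$) you cannot conclude $w_a\leq w_{i_j}$, and the contradiction with minimality for the mixed sequence $w_1,\ldots,w_{i_1-1},v_1,v_2,\ldots$ fails at exactly that step. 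Your approach works as the stratification-plus-Higman argument above, not as this variant of Nash--Williams.

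For comparison, the paper also runs a Nash--Williams minimal-bad-sequence argument, but it deletes the \emph{rightmost non-exceptional} letter (the last position whose $\sigma$-orbit appears at least twice in the word). Because another occurrence of that orbit lies strictly to the left, the truncated word $v_j$ genuinely satisfies $v_j<w_{i_j}$, so the $w_a\leq v_j$ case goes through by transitivity; and because both the deleted letter and its distance from the right end are constant along the chosen subsequence, one can reinsert it to promote $v_j\leq v_k$ to $w_{i_j}\leq w_{i_k}$. The paper's choice of which letter to delete is precisely what makes the Nash--Williams mechanism compatible with the first-occurrence constraint; your stratification route sidesteps that issue entirely by reducing to classical Higman.
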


\begin{proof}
Suppose not, and choose a sequence $w_1,w_2,w_3\ldots$ of words such that $i<j\Rightarrow w_i\not\leq w_j$.
We may assume that our sequence is {\bf minimal} in the sense that, for each $i$, 
the length of $w_i$ is minimal among all such sequences that begin $w_1,\ldots,w_{i-1}$.
Given a word $w$ and an element $e\in E$, we say that $e$ is {\bf exceptional} in $w$ if 
either $e$ or $-e$ appears exactly once in $w$ (and the other, if different, does not appear at all).
If $w$ has a non-exceptional element, we define $m(w)$ to be the number of letters appearing to the right of the last non-exceptional element.

There are only finitely many words of each length, thus we may choose a natural number $i_0$ such that, for all $i\geq i_0$, 
the length of $w_i$ is strictly greater than $\nu(E,\sigma)+1$.  It follows that, for all $i\geq i_0$, $w_i$ has a non-exceptional element.
There are only finitely many possible values for $m(w_i)$ and only finitely many elements in $E$, so we may 
find a natural number $m$ and an element $e\in E$ and 
pass to a subsequence $w_{i_1},w_{i_2},w_{i_3},\ldots$ such that $m(w_{i_j}) = m$ for all $j$ 
and the last non-exceptional element appearing in $w_{i_j}$ is $e$ for all $j$.

Let $v_{j}$ be the word obtained from $w_{i_j}$ by deleting the unique appearance of $e$, and note that $v_{j}< w_{i_j}$ for all $j$.
Consider the sequence $w_1,w_2,\ldots,w_{i_1-1},v_{1},v_{2},\ldots$.  By minimality of our original sequence, this sequence must contain
a pair of elements with the first less than or equal to the second.  We know that this cannot happen in the first $i_1-1$ terms, and we also
cannot have $w_k\leq v_{j}$ for some $k<i_1$ and $j\geq 1$, because this would imply that $w_k<w_{i_j}$.  Finally, there cannot exist
$j<k$ such that $v_{j}\leq v_{k}$, because this would imply that $w_{i_j}<w_{i_k}$.  Thus we have arrived at a contradiction.
\end{proof}

\begin{corollary}\label{ordered}
For any object, $(E,\sigma)$ of $\OSB$, every ideal in the poset $E^\star$ is an ordered language.
\end{corollary}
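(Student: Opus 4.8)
The plan is to reduce to the case of a principal ideal and then describe such an ideal explicitly as a concatenation of languages of the form $\Pi^\star$ (with $\Pi\subseteq E$) and singletons. First, by Proposition \ref{noetherian} the poset $E^\star$ is Noetherian; in fact the minimal-bad-sequence argument used in its proof shows that $E^\star$ is a well-quasi-order, so any ideal $I\subseteq E^\star$ is the union of the finitely many principal ideals $\{v\mid v\ge w_s\}$ generated by its minimal elements $w_1,\dots,w_q$. Since ordered languages are closed under finite unions, it suffices to show that for a single word $w$ the principal ideal $U_w:=\{v\in E^\star\mid w\le v\}$ is an ordered language.

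Next I would unwind the order relation on $U_w$. Write $O_1,\dots,O_p$ for the $\sigma$-orbits occurring in $w$, listed in order of first appearance, and let $1=a_1<\cdots<a_p$ be the positions in $w$ of those first appearances, so that $w=e_{a_1}w^{(1)}e_{a_2}w^{(2)}\cdots e_{a_p}w^{(p)}$, where each factor $w^{(k)}$ uses only letters from $\Pi_k:=O_1\cup\cdots\cup O_k$. The claim is that
$$U_w=\{e_{a_1}\}\cdot L_1\cdot\{e_{a_2}\}\cdot L_2\cdots\{e_{a_p}\}\cdot L_p,$$
where $L_k$ is the set of words over the alphabet $\Pi_k$ that contain $w^{(k)}$ as a (plain) subword. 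The inclusion $\supseteq$ is routine: given $f$ in the right-hand side, the displayed copies of $e_{a_1},\dots,e_{a_p}$ together with a subword embedding of each $w^{(k)}$ produce the required strictly increasing map $\theta$, and the first-occurrence condition holds because each displayed letter $e_{a_k}$ is forced to be the first occurrence of $O_k$ in $f$ (letters of $L_j$ for $j<k$ avoid $O_k$). For $\subseteq$ I would argue that, for any $f$ with $w\le f$ witnessed by some $\theta$, the orbits occurring in $f$ are exactly $O_1,\dots,O_p$, that $\theta$ carries $a_k$ to the position of the first occurrence of $O_k$ in $f$, and that the letter there equals $e_{a_k}$; cutting $f$ at these positions then exhibits it as an element of the right-hand side, the segment between the $k$-th and $(k+1)$-st cut using only letters of $\Pi_k$ and containing $w^{(k)}$ as a subword.

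Finally, each $L_k$ is an ordered language: if $w^{(k)}=c_1\cdots c_r$ then $L_k=\Pi_k^\star\{c_1\}\Pi_k^\star\{c_2\}\cdots\{c_r\}\Pi_k^\star$, a concatenation of languages of the form $\Pi_k^\star$ and of singletons. Hence $U_w$, being a concatenation of singletons and the $L_k$, is an ordered language, and the corollary follows.

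The main obstacle is the $\subseteq$ direction of the structural claim — in particular, verifying that in any $f\ge w$ the first occurrences of the orbits must line up with those of $w$ and carry the very same orbit representatives, so that $f$ genuinely decomposes against the fixed factorization of $w$. A secondary soft spot is the passage from ``finitely many minimal elements'' to ``the ideal is the union of the principal ideals they generate,'' which is why I would lean on the well-quasi-order content of the proof of Proposition \ref{noetherian} rather than only on its bare statement.
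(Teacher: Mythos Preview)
Your proof is correct and follows the same strategy as the paper: reduce to principal ideals via Noetherianity (your caution about needing the full well-quasi-order content to cover the ideal by its minimal elements is justified, and the paper leaves this implicit), then exhibit each principal ideal explicitly as a concatenation of singletons and languages of the form $\Pi^\star$. The paper's description is marginally more direct---for $w=e_1\cdots e_n$ it writes $I_w=e_1\Pi_1^\star e_2\Pi_2^\star\cdots e_n\Pi_n^\star$ with $\Pi_i=\{\pm e_1,\dots,\pm e_i\}$, indexed over \emph{all} positions of $w$ rather than just the orbit-first-occurrence positions; since $\Pi_i=\Pi_k$ for $a_k\le i<a_{k+1}$, expanding your $L_k=\Pi_k^\star c_1\Pi_k^\star\cdots c_r\Pi_k^\star$ recovers exactly this formula.
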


\begin{proof}
If $w = e_1\cdots e_n\in E^\star$, we define $I_w$ to be the principal ideal consisting of all words greater than or equal to $w$.
By Proposition \ref{noetherian}, every ideal in $E^\star$ is a finite union of principal ideals, so it is sufficient to show that $I_w$
is an ordered language.  For all $i\in [r]$, let $\Pi_i = \{\pm e_1,\ldots, \pm e_i\}$.  Then
$$I_w = e_1\Pi_1^\star e_2\Pi_2^\star\cdots e_n\Pi_n^\star$$
is a concatenation of singleton languages and languages of the form $\Pi_i^\star$, so it is ordered.
\end{proof}

Consider the norm on $\OSBop$ that takes $(E,\sigma)$ to the number of free orbits in $E$; in other words, the object $[-n,n]$ has norm $n$.
Given a morphism $\varphi:[-n,n]\to (E,\sigma)$ in $\OSB$, let
$$\iota_{(E,\sigma)}(\varphi) := \varphi(1)\cdots \varphi(n)\in E^\star.$$
Since every object of $\OSB$ is uniquely isomorphic to $[-n,n]$ for some $n$, this defines a map  
$$\iota_{(E,\sigma)}:|(\OSBop)_{(E,\sigma)}|\to E^\star.$$

\begin{lemma}\label{scratch}
Let $(E,\sigma)$ be an object of $\OSB$.
\begin{enumerate}
\item The map $\iota_{(E,\sigma)}$ is strictly order preserving.  That is, $\varphi<\varphi'\in|(\OSBop)_{(E,\sigma)}|$ if and only if $\iota_{(E,\sigma)}(\varphi)<\iota_{(E,\sigma)}(\varphi')\in E^\star$.
\item The image of an ideal in $|(\OSBop)_{(E,\sigma)}|$ is an ideal in $E^\star$.
\end{enumerate}
\end{lemma}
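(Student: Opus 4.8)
The plan is to deduce both parts of the lemma from a single \emph{lifting} statement, after recording two easy preliminaries. First, $\iota := \iota_{(E,\sigma)}$ is injective: if $\iota(\varphi) = \iota(\varphi')$, the two words have equal length, so the domains agree ($=[-n,n]$ for a common $n$), and since the words agree entrywise and both morphisms are $\sigma$-equivariant with $0\mapsto 0$, we get $\varphi = \varphi'$. Second, $\iota$ is monotone: if $\varphi\le\varphi'$, choose an $\OSB$-morphism $\psi\colon[-n',n']\to[-n,n]$ with $\varphi' = \varphi\circ\psi$ and set $\theta(i) := \init\psi^{-1}(i)$; property (ii) for $\psi$ makes $\theta\colon[n]\to[n']$ strictly increasing, property (i) gives $\psi(\theta(i)) = i$ and hence $\iota(\varphi')_{\theta(i)} = \varphi(i) = \iota(\varphi)_i$, and for a position $j$ with $\psi(j) = k\neq 0$ the index $\theta(|k|)\le j$ satisfies $\iota(\varphi')_{\theta(|k|)} = \varphi(|k|)\in\{\pm\varphi(k)\} = \{\pm\iota(\varphi')_j\}$, so $\theta$ exhibits $\iota(\varphi)$ as a subword of $\iota(\varphi')$ containing the first occurrence of every $\sigma$-orbit that appears; thus $\iota(\varphi)\le\iota(\varphi')$, and with injectivity and $\varphi\ne\varphi'$ this gives the forward implication of~(1).

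The crux is the following lifting statement: \emph{for every $\OSB$-morphism $\varphi\colon[-n,n]\to(E,\sigma)$ and every word $w = f_1\cdots f_{n'}\in E^\star$ with $\iota(\varphi)\le w$, there is an $\OSB$-morphism $\varphi'\colon[-n',n']\to(E,\sigma)$ with $\iota(\varphi') = w$ and $\varphi\le\varphi'$.} Granting this, both conclusions follow. For the reverse implication of~(1): if $\iota(\varphi) < \iota(\varphi')$, lift $\iota(\varphi')$ over $\varphi$; the lift has the same $\iota$-image as $\varphi'$, hence equals $\varphi'$ by injectivity, so $\varphi\le\varphi'$ and $\varphi\ne\varphi'$. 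For~(2): given an ideal $I$, an element $\iota(\varphi)\in\iota(I)$ (so $\varphi\in I$), and a word $w\ge\iota(\varphi)$ in $E^\star$, the lift $\varphi'\ge\varphi$ of $w$ lies in $I$ because $I$ is upward closed, whence $w = \iota(\varphi')\in\iota(I)$; thus $\iota(I)$ is upward closed in $E^\star$, i.e.\ an ideal.

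To prove the lifting I would argue as follows. The values of $\varphi'$ are forced by the requirement $\iota(\varphi') = w$: set $\varphi'(j) := f_j$, $\varphi'(0) := 0$, $\varphi'(-j) := -f_j$. This map is $\sigma$-equivariant and surjective (its values include every value of $\varphi$, since the letters of $\iota(\varphi)$ occur among the $f_j$, together with $0$). Fixing $\theta$ realizing $\iota(\varphi)\le w$, the subword-with-first-occurrences condition pins down, for each $v\in E^+$, the first position of $w$ in the orbit $\{\pm v\}$ to be $\theta(i_v)$ with $i_v := \init\varphi^{-1}(v)$; property~(i) for $\varphi$ gives $\varphi(i_v) = v$, so the letter of $w$ there is $v$ itself, yielding property~(i) for $\varphi'$, while property~(ii) for $\varphi'$ follows since $i_v < i_{v'}$ for $v < v'$ in $E^+$ and $\theta$ is strictly increasing. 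It remains to exhibit an $\OSB$-morphism $\psi\colon[-n',n']\to[-n,n]$ with $\varphi\circ\psi = \varphi'$: set $\psi(\theta(i)) := i$, $\psi(0) := 0$, $\psi(-j) := -\psi(j)$, and for $j\notin\theta([n])$ set $\psi(j) := 0$ if $f_j = 0$ and $\psi(j) := \pm i_{|f_j|}$ otherwise, with the sign chosen so that $\varphi(\psi(j)) = f_j$ (possible because $\varphi(i_v) = v$). Then $\varphi\circ\psi = \varphi'$, $\psi$ is a $\sigma$-equivariant surjection, and one checks that $\init\psi^{-1}(i) = \theta(i)$ for all $i\in[n]$ — any further preimage of $\pm i$ arises from some $j\notin\theta([n])$ lying in the $\sigma$-orbit of $\varphi(i)$ in $w$, hence occurs after $\theta(i)$ — so $\psi$ satisfies~(i), and~(ii) for $\psi$ is immediate from $\theta$ strictly increasing. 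The main obstacle is exactly this lifting step, and inside it the verification of the $\init$-conditions~(i) and~(ii) for $\varphi'$ and for $\psi$: this is the point at which the definition of morphism in $\OSB$ and the order on $E^\star$ are matched to one another, the clause ``contains the first occurrence of every $\sigma$-orbit'' in the order on $E^\star$ being precisely what guarantees that the relevant initial positions, and hence the signs and relative order that~(i) and~(ii) control, are transported correctly from $\iota(\varphi)$ to $w$.
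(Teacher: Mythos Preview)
Your argument is essentially the paper's, repackaged: the forward direction of~(1) uses $\theta(i):=\init\psi^{-1}(i)$ exactly as the paper does, and your lifting lemma neatly combines what the paper does in two separate steps --- constructing $\psi$ from $\theta$ for the converse of~(1), and then identifying the image of $\iota_{[-n,n]}$ as the principal ideal $I_{12\cdots n}$ to deduce~(2). Your organization is arguably cleaner, since the lifting statement yields both conclusions at once.

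There is, however, a gap in your verification of the forward direction that the paper's own proof shares. In checking the second clause of the order on $E^\star$, you treat only positions $j$ with $\psi(j)=k\neq 0$. When $\psi(j)=0$ one has $\varphi'(j)=0$, and the clause demands some $i$ with $\theta(i)\le j$ and $\varphi(i)=f_{\theta(i)}=0$; no such $i$ need exist. Concretely, take $(E,\sigma)=[-1,1]$, $\varphi=\id$, and $\psi\colon[-2,2]\to[-1,1]$ with $\psi(\pm 1)=\pm 1$ and $\psi(\pm 2)=\psi(0)=0$: then $\varphi<\varphi':=\varphi\circ\psi$, yet $\iota(\varphi)=1$ is not $\le\iota(\varphi')=10$ under the stated order, since the subword $1$ misses the first occurrence of the orbit $\{0\}$. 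This does not affect your lifting lemma, the converse of~(1), or statement~(2), and hence does not touch the downstream Gr\"obner and $\O$-lingual conclusions, which only require that $\iota$ be order-reflecting and send ideals to ideals; it points rather to a small imprecision in the definition of the order on $E^\star$. Restricting the second clause to positions $j$ with $f_j\neq 0$ repairs the statement, and with that amendment your forward argument goes through verbatim.
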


\begin{proof}
We begin with statement (1).  Suppose that $\varphi:[-m,m]\to(E,\sigma)$, $\varphi':[-n,n]\to(E,\sigma)$, and $\varphi<\varphi'$.
Then there exists $\psi:[-n,n]\to[-m,m]$ such that $\varphi' = \varphi\circ\psi$.  Define a map $\theta:[m]\to[n]$ by
$\theta(i) := \init \psi^{-1}(i)$.  Then $\theta$ exhibits the inequality $\iota_{(E,\sigma)}(\varphi)<\iota_{(E,\sigma)}(\varphi')\in E^\star$.

Conversely, suppose that $\iota_{(E,\sigma)}(\varphi)<\iota_{(E,\sigma)}(\varphi')\in E^\star$, and let $\theta:[m]\to[n]$ be the map
that exhibits this inequality.  By definition, for each $j\in[n]$, there exists an element $i\in [m]$ such that $\theta(i)\leq j$ and
$\varphi'(\theta(i))\in\{\pm\varphi'(j)\}$.  Let $i$ be the minimal such element.
Define $\psi(j) = i$ if $\varphi'(\theta(i)) = \varphi'(j)$ and $-i$ if $\varphi'(\theta(i)) = -\varphi'(j)$.
This extends uniquely to an $\OSB$ morphism $\psi:[-n,n]\to[m,m]$ with $\varphi'=\varphi\circ\psi$, so $\varphi<\varphi'$.

For statement (2), we first observe that the image of $\iota_{[-n,n]}$ is equal to the ideal $I_{12\cdots n}\subset [-n,n]^\star$.
Suppose that $I\subset |(\OSBop)_{[-n,n]}|$ is an ideal, $\varphi\in I$, and $w\geq \iota_{[-n,n]}(\varphi)$.
Since the image of $\iota_{[-n,n]}$ is an ideal, we have $w= \iota_{[-n,n]}(\varphi')$ for some $\varphi'$.  Statement (1) terlls us that
$\varphi<\varphi'$, so $\varphi'\in I$ and $w\in  \iota_{[-n,n]}(I)$.
\end{proof}

\begin{proposition}\label{grobling}
The category $\OSBop$ is Gr\"obner, and $\O$-lingual with respect to the maps $\iota_{(E,\sigma)}$.
\end{proposition}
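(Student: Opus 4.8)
The plan is to verify the three defining conditions in turn: directedness, property (G1), property (G2) for the Gröbner claim, and then the two bullet conditions in the definition of $\O$-lingual. Most of the pieces are already in place from the preceding lemmas, so the proof should mostly be an assembly job with one genuinely new ingredient.

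First I would dispense with directedness: it was already observed above that $\OSB$ (hence $\OSBop$) has no nontrivial endomorphisms, since every object is uniquely isomorphic to some $[-n,n]$ and a morphism $[-n,n]\to[-n,n]$ in $\OSB$ is forced to be the identity (it is a surjection of finite sets of equal size, hence a bijection, and conditions (i) and (ii) pin down the ordering). Next, for the $\O$-lingual structure I would use the norm and the maps $\iota_{(E,\sigma)}$ already introduced. The first bullet is immediate: $\iota_{(E,\sigma)}(\varphi)=\varphi(1)\cdots\varphi(n)$ has length $n=\nu([-n,n])$, and $[-n,n]$ is the source, so this matches ``$\iota_x(\varphi)$ is a word of length $\nu(y)$'' once we remember that in $\OSBop$ the source of a morphism $\varphi:x\to y$ corresponds to the object $y$ of $\OSB$. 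For the second bullet, let $I\subset|(\OSBop)_x|$ be an ideal. By Lemma \ref{scratch}(2), $\iota_x(I)$ is an ideal in $E^\star$, and by Corollary \ref{ordered} every ideal in $E^\star$ is an ordered language; hence $\iota_x(I)$ is an ordered language. That settles $\O$-linguality.

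For the Gröbner property I still need (G1) and (G2). Property (G2) asks that $|(\OSBop)_x|$ be Noetherian. This follows from the work already done: by Lemma \ref{scratch}(1) the map $\iota_x$ is a strict order embedding of $|(\OSBop)_x|$ into $E^\star$, and by Lemma \ref{scratch}(2) it carries ideals to ideals; since $E^\star$ is Noetherian by Proposition \ref{noetherian}, any ideal of $|(\OSBop)_x|$ has its image generated by finitely many minimal elements of $\iota_x(I)$, and pulling these back (using that $\iota_x$ reflects the order and is injective) gives finitely many minimal elements of $I$. So $|(\OSBop)_x|$ is Noetherian, i.e.\ (G2) holds.

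The one point requiring a new argument is property (G1): for each object $x=(E,\sigma)$ I must exhibit a well-order $\prec$ on $(\OSBop)_x$ that is compatible with postcomposition, i.e.\ $\varphi\prec\varphi'\Rightarrow\psi\circ\varphi\prec\psi\circ\varphi'$. The natural candidate is to transport a suitable well-order on $E^\star$ back through $\iota_x$. Fix any total order on $E$; then order $E^\star$ first by length and then lexicographically within each length — this is a well-order on each fixed-length stratum, and since $\iota_x$ sends a morphism with source $[-n,n]$ to a word of length exactly $n$, it sends the finitely-generated objects of each norm to a single length-stratum, so the induced order on $(\OSBop)_x$ is a well-order. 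Compatibility with postcomposition is the thing to check carefully: given $\psi:[-m,m]\to[-n,n]$ in $\OSB$ (a morphism of $\OSB$, hence contravariantly a postcomposition on the $\FC_x$ side), one has $\iota_x(\varphi\circ\psi)_j=\varphi(\psi(j))$, so the word for $\varphi\circ\psi$ is obtained from the word for $\varphi$ by the fixed substitution $j\mapsto\psi(j)$ applied coordinatewise; since this substitution is the same for $\varphi$ and $\varphi'$ and acts coordinatewise, it preserves the lexicographic comparison of two words of the same length — if $\varphi$ and $\varphi'$ first differ at position $j$ with $\varphi(j)$ before $\varphi'(j)$, then $\psi(\varphi(j))$ comes before $\psi(\varphi'(j))$ in the $E'$-order (once we also fix a total order on $E'$ compatible with $\sigma'$), so $\varphi\circ\psi$ first differs from $\varphi'\circ\psi$ at position $j$ in the right direction. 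Here a little care is needed because $\psi$ may not be injective, so the ``first position of difference'' could in principle shift; but it cannot shift \emph{earlier}, and if it shifts later that only happens when $\psi$ identifies the differing letters, which is impossible since $\psi$ is a function of the letters. I expect this verification of (G1) to be the main obstacle, essentially a careful bookkeeping argument; everything else is a direct appeal to the lemmas above. Once (G1), (G2), directedness, and the two $\O$-lingual bullets are in hand, the proposition follows.

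\begin{proof}
Directedness of $\OSBop$ was noted above: $\OSB$ has no nontrivial endomorphisms, hence neither does $\OSBop$.

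For property (G1), fix an object $(E,\sigma)$ and a total order on $E$. Order $E^\star$ by length, and lexicographically within each fixed length. Since a morphism $\varphi:[-n,n]\to(E,\sigma)$ in $\OSB$ has $\iota_{(E,\sigma)}(\varphi)$ of length exactly $n$, and every object of $\OSB$ is uniquely isomorphic to some $[-n,n]$, the morphisms in $(\OSBop)_{(E,\sigma)}$ with a fixed source-norm map into a single length-stratum, on which the lexicographic order is a well-order; ordering the strata by $n$ gives a well-order $\prec$ on $(\OSBop)_{(E,\sigma)}$ via $\iota_{(E,\sigma)}$. To see $\prec$ is compatible with postcomposition, note that for $\psi:[-m,m]\to[-n,n]$ in $\OSB$ and $\varphi,\varphi':[-n,n]\to(E,\sigma)$ with $\iota(\varphi)<\iota(\varphi')$ lexicographically (same length $n$), the words $\iota(\varphi\circ\psi)$ and $\iota(\varphi'\circ\psi)$ are obtained by applying the same coordinatewise substitution $j\mapsto\psi(j)$, so if the first coordinate of disagreement between $\varphi$ and $\varphi'$ is some $j_0$, the words $\varphi\circ\psi$ and $\varphi'\circ\psi$ agree before $j_0$ and disagree at $j_0$ (they cannot agree there, as $\psi$ is a function), with the disagreement in the correct direction for any total order on the target compatible with its involution; hence $\varphi\circ\psi\prec\varphi'\circ\psi$. (The comparison with the empty word and the case of differing lengths cannot arise here, as all morphisms with a common source have equal length.) This establishes (G1).

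For property (G2), by Lemma \ref{scratch}(1) the map $\iota_{(E,\sigma)}$ embeds $|(\OSBop)_{(E,\sigma)}|$ into $E^\star$ as a strict order embedding, and by Lemma \ref{scratch}(2) it carries ideals to ideals. Given an ideal $I\subset|(\OSBop)_{(E,\sigma)}|$, its image $\iota_{(E,\sigma)}(I)$ is an ideal in $E^\star$, which is Noetherian by Proposition \ref{noetherian}; thus $\iota_{(E,\sigma)}(I)$ has finitely many minimal elements, and since $\iota_{(E,\sigma)}$ is injective and reflects the order, $I$ has finitely many minimal elements. Hence $|(\OSBop)_{(E,\sigma)}|$ is Noetherian, proving (G2). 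Together with directedness this shows $\OSBop$ is Gr\"obner.

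Finally, for the $\O$-lingual structure we use the norm $\nu$ and the maps $\iota_{(E,\sigma)}:|(\OSBop)_{(E,\sigma)}|\to E^\star$. If $\varphi$ is a morphism in $\OSBop$ with source corresponding to the object $[-n,n]$ of $\OSB$, then $\iota_{(E,\sigma)}(\varphi)=\varphi(1)\cdots\varphi(n)$ has length $n=\nu([-n,n])$, so the first required property holds. For the second, any ideal $I\subset|(\OSBop)_{(E,\sigma)}|$ has $\iota_{(E,\sigma)}(I)$ an ideal in $E^\star$ by Lemma \ref{scratch}(2), and every ideal in $E^\star$ is an ordered language by Corollary \ref{ordered}; thus $\iota_{(E,\sigma)}(I)$ is an ordered language. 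Therefore $\OSBop$ is $\O$-lingual with respect to the maps $\iota_{(E,\sigma)}$.
\end{proof}
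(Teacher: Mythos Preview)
Your overall strategy matches the paper's proof exactly: directedness was already noted, (G2) comes from Proposition~\ref{noetherian} via Lemma~\ref{scratch}(1), $\O$-linguality from Corollary~\ref{ordered} via Lemma~\ref{scratch}(2), and (G1) by pulling back the lexicographic order on $E^\star$. The paper's proof is equally brief on (G1), so in spirit you have done at least as much.

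That said, your verification of (G1) has a genuine gap. You describe the passage from $\iota(\varphi)$ to $\iota(\varphi\circ\psi)$ as ``the same coordinatewise substitution $j\mapsto\psi(j)$'' and then claim that if $\varphi,\varphi'$ first differ at some position $j_0$, the composed words ``agree before $j_0$ and disagree at $j_0$.'' But $j_0\in[n]$ indexes $\iota(\varphi)$, while $\iota(\varphi\circ\psi)$ is indexed by $[m]$, so the statement is a category error as written. More substantively, the $j$th letter of $\iota(\varphi\circ\psi)$ is $\varphi(\psi(j))$ with $\psi(j)\in[-n,n]$ possibly \emph{negative}; since $\varphi(-k)=\sigma(\varphi(k))$ and $\sigma$ is order-\emph{reversing}, a disagreement coming from a negative value of $\psi$ would go the wrong way. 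Your argument never rules this out, and indeed it cannot be ruled out without invoking conditions (i) and (ii) in the definition of an $\OSB$ morphism --- the very conditions that distinguish $\OSB$ from $\FSB$.

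The missing step is this: if $k_0\in[n]$ is the first index with $\varphi(k_0)\neq\varphi'(k_0)$, set $a:=\init\psi^{-1}(k_0)$. Condition (ii) gives $\init\psi^{-1}(l)>a$ for all $l>k_0$, so every $j<a$ in $[m]$ has $|\psi(j)|<k_0$ and hence $\varphi(\psi(j))=\varphi'(\psi(j))$. Condition (i) gives $\psi(a)=k_0$ (positive, not $-k_0$), so at position $a$ we get $\varphi(k_0)<\varphi'(k_0)$ in the correct direction. This is where the $\OSB$ structure is actually used; without it the lexicographic pullback need not be compatible with composition.
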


\begin{proof}
Property (G2) follows from Proposition \ref{noetherian} and Lemma \ref{scratch}(1).  Property (G1) is proved by
pulling back the lexicographic order from $E^\star$ to $|(\OSBop)_{(E,\sigma)}|$.  This shows that 
$\OSBop$ is Gr\"obner.  The statement that $\OSBop$ is $\O$-lingual follows from Corollary \ref{ordered} and Lemma \ref{scratch}(2).
\end{proof}

\begin{proof}[Proof of Theorem \ref{qGB}.]
This follows from Theorem \ref{SSqG}, Lemma \ref{F}, and Proposition \ref{grobling}.
\end{proof}

\begin{proof}[Proof of Theorem \ref{B-small}.]
We begin by proving statement (1) for an $\FSBop$-module $N$ that is generated in degrees $\leq d$.
By Proposition \ref{generators} and Lemma \ref{F}, the $\OSBop$-module $\Phi^*N$ is also generated by objects of norm $\leq d$. 
Then Theorem \ref{lingual} and Proposition \ref{grobling} tell us that
$$H_{\!B}(N; t) = H_{\FSBop}(N; t) = H_{\OSBop}(\Phi^*N; t)$$ is a rational function with poles contained in the set $\{1/j\mid 1\leq j\leq 2d+1\}$.
Now suppose that $N$ is $d$-small.  By Theorem \ref{qGB}, there is some $d$ such that $N$ is finitely generated in degrees $\leq d'$,
so $H_{\!B}(N; t)$ is a rational function with poles contained in the set $\{1\leq j\leq 2d+1\}$.  However, the fact that $N$
is $d$-small means that the dimension $\dim N[-n,n]$ can only grow as fast as the dimension of a module that is finitely generated in degree $\leq d$,
therefore $H_{\!B}(N; t)$ cannot have a pole at $1/j$ when $j>2d+1$.  Finally, since passing to the associated graded of a filtration does
not change the Hilbert series of a module, this proves statement (1) when $N$ is $d$-smallish.

To prove statements (2) and (3), 
it is sufficient to check them for the principal projective $P_{[-d,d]}$.  The dimension of $P_{[-d,d]}[-n,n]$
is equal to the number of equivariant surjections from $[-n,n]$ to $[-d,d]$.  The total number of equivariant maps is $n^{2d+1}$,
and when $n$ is large, almost all equivariant maps are surjective, hence we have
$r_{\!B}^d(P_{[-d,d]}) = 1$.  
Let $\varphi$ be a morphism from $[-n,n]$ to $[-d,d]$, and consider the subgroup
$$W_\varphi\cong W_{|\varphi^{-1}(0)}| \times S_{|\varphi{-1}(1)|}\times\cdots\times S_{|\varphi^{-1}(d)|}\subset W_n$$
that stabilizes $\varphi$.  Then the $W_n$ representation $P_{[-d,d]}[-n,n]$ is isomorphic to
$$\bigoplus_\varphi \Ind_{W_\varphi}^{W_n}(\triv),$$
where the sum is over one representative of each $W_n$ orbit in $\Hom_{\FS}\!\big([-n,n],[-d,d]\big)$.
The fact that each one of these summands is a sum of representations of the form $V_{\la,\mu}$ with $\ell(\la)\leq d+1$
and $\ell(\mu)\leq d$ follows from induction on $d$ using the type $B$ Pieri rule \cite[Section 6.1.9]{GeckPfeiffer}.
\end{proof}

\section{Hyperplane arrangements}
Let $V$ be a finite dimensional vector space.
A {\bf hyperplane arrangement} in $V$ is a finite set of codimension 1 linear subspaces of $V$.
The following pair of examples will appear many times throughout this section.

\begin{example}\label{type A arrangement}
Given a nonempty finite set $E$ and any element $e\in E$, let $x_e$ be the $e^\text{th}$ coordinate function on $\C^E$,
and let $V_E \subset \C^E$
be the codimension 1 subspace consisting of vectors whose coordinates add to zero.
For any unordered pair of distinct elements $e\neq f\in E$, consider the hyperplane
$$H_{ef} := \left\{v\in V_E\mid x_e(v) = x_f(v)\right\}.$$
Let $$\cA_E := \left\{H_{ef}\mid e\neq f\in E\right\}$$
be the corresponding hyperplane arrangement in $V_E$.  When $E = [n]$, $\cA_E$ can be identified
with the Coxeter arrangement of type $A_{n-1}$, or equivalently the set of reflection hyperplanes for the Coxeter group $S_n$.
\end{example}

\begin{example}\label{type B arrangement}
For any object $(E,\sigma)$ of $\FSB$, consider the vector space
$$V_{(E,\sigma)} := \left\{v\in\C^E\mid x_e(v) + x_{\sigma(e)}(v) = 0\; \text{for all $e\in E$}\right\}\subset V_E \subset \C^E.$$
For each unordered pair $e\neq f\in E$, let 
$$J_{ef}:= V_{(E,\sigma)}\cap H_{ef}\subset V_{(E,\sigma)}.$$
Note that we have $J_{\sigma(e)\sigma(f)} = J_{ef}$ for all $e\neq f\in E$, and if $0\in E$ is the unique fixed point, then 
$J_{e\sigma(e)} = J_{e0}$ for all $e\neq 0$.
Let $$\cA_{(E,\sigma)} := \{J_{ef}\mid e\neq f\in E\}$$
be the corresponding hyperplane arrangement in $V_{(E,\sigma)}$.  When $(E,\sigma) = [-n,n]$, $\cA_{(E,\sigma)}$ can be identified
with the Coxeter arrangement of type $B_n$, or equivalently the set of reflection hyperplanes for the Coxeter group $W_n$.
\end{example}

Given a hyperplane arrangement $\cA$ in $V$, a {\bf flat} of $\cA$ is a linear subspace $F\subset V$ obtained
by intersecting some subset of the hyperplanes.
The {\bf contraction} of $\cA$ at $F$ is the hyperplane arrangement
$$\cA^F := \{F\cap H \mid F\not\subset H\in \cA\}$$
in the vector space $F$.  
The {\bf localization} of $\cA$ at $F$ is the hyperplane arrangement
$$\cA_F := \{H/F \mid F\subset H\in \cA\}$$
in the vector space $V/F$.  If $\cA_1$ is a hyperplane arrangement in $V_1$ and
$\cA_2$ is a hyperplane arrangement in $V_2$, the {\bf product} $\cA_1\times\cA_2$ is defined to be the hyperplane
arrangement in $V_1\oplus V_2$ with hyperplanes $$\{H_1\oplus V_2\mid H_1\in\cA_1\}\cup\{V_1\oplus H_2\mid H_2\in\cA_2\}.$$

\begin{example}\label{FS-flats}
For any surjective map $\varphi:E_1\to E_2$ of finite sets, we may define a flat
$$F_\varphi := \bigcap_{\substack{e\neq f\in E_1\\ \varphi(e)=\varphi(f)}} H_{ef} \subset V_{E}$$
of the arrangement $\cA_E$.
Every flat of $\cA_{E_1}$ is of this form, and if we have two surjections $\varphi:E_1\to E_2$ and $\varphi':E_1\to E_2'$,
then $F_{\varphi} = F_{\varphi'}$ if and only if there is a bijection $\psi:E_2\to E_2'$ such that $\varphi'=\psi\circ\varphi$.
The contraction of $\cA_{E_1}$ at $F_{\varphi}$ can be canonically identified with
$\cA_{E_2}$, and the localization of $\cA_{E_1}$ at the flat $F_\varphi$ can be canonically identified with the product
$$\prod_{e\in E_2} \cA_{\varphi^{-1}(e)}.$$
\end{example}

\begin{example}\label{FSB-flats}
Given a morphism $\varphi:(E_1,\sigma_1)\to(E_2,\sigma_2)$ in $\FSB$, we may define a flat
$$G_\varphi := \bigcap_{\substack{e\neq f\in E_1\\ \varphi(e)=\varphi(f)}} J_{ef} \subset V_{(E_1,\sigma_1)}$$
of the arrangement $\cA_{(E_1,\sigma_1)}$.
Every flat of $\cA_{(E_1,\sigma_1)}$ is of this form, 
and if we have two morphisms $\varphi:(E_1,\sigma_1)\to(E_2,\sigma_2)$ and $\varphi':(E_1,\sigma_1')\to(E_2,\sigma_2')$,
then $G_{\varphi} = G_{\varphi'}$ if and only if there is an isomorphism $\psi:(E_2,\sigma_2)\to (E_2',\sigma_2')$ 
such that $\varphi'=\psi\circ\varphi$.
The contraction of $\cA_{(E_1,\sigma_1)}$ at 
$G_{\varphi}$ can be canonically identified with
$\cA_{(E_2,\sigma_2)}$.
To understand the localization, we first choose a decomposition
$$E_2 = P_2 \sqcup \{0\} \sqcup \sigma_2(P_2),$$
where $0\in E_2$ is the unique fixed point.  Then the localization of $\cA_{(E_1,\sigma_1)}$ at the flat $G_\varphi$ 
can be canonically identified with the product
$$\cA_{(\varphi^{-1}(0),\sigma_1)}\times \prod_{e\in P_2} \cA_{\varphi^{-1}(e)}.$$
\end{example}

\begin{remark}
If we want to avoid choosing a decomposition of $E_2$, we can replace the product over $P_2$ with a product over non-fixed
$\sigma_2$-orbits, and replace the preimage of $e\in P_2$ with the set of $\sigma_1$-orbits in the preimage of the $\sigma_2$-orbit.
This would be more canonical, but also more unwieldy to notate.
\end{remark}


\section{Orlik--Solomon algebras}\label{sec:OS}
Let $\cA$ be a hyperplane arrangement.
A set $\mathcal{D}\subset \cA$ is called {\bf dependent} if the codimension of its intersection
is smaller than its cardinality (equivalently, if the corresponding set of normal vectors is linearly dependent).  
For any dependent set $\mathcal{D} = \{H_1,\ldots,H_k\}\subset \cA$ of cardinality $k$,
we define a class $$\partial u_\mathcal{D} := \sum_{i=1}^k (-1)^i \prod_{j\neq i} u_{H_j}$$
in the exterior algebra
$\Lambda_\C[u_H\mid H\in \cA]$.  Note that the element $u_S$ as we have defined it depends on the ordering of the elements of $S$,
but only up to sign.  The {\bf Orlik--Solomon algebra} $\OS(\cA)$\footnote{It is typical to denote the Orlik--Solomon algebra
either $OS(\cA)$ or $A(\cA)$, but we wish to avoid conflict with the notation for the category $\OSB$ and with the use of the letter $A$
for type $A$ structures.  So, with apologies to Peter Orlik, we are just using the letter $S$.}
is defined as the quotient of $\Lambda_\C[u_H\mid H\in \cA]$
by the ideal generated by $\partial u_S$ for every dependent set $\mathcal{D}$.
If $\cA_1$ and $\cA_2$ are two hyperplane arrangements, then
\begin{equation}\label{kunneth}\OS(\cA_1\times\cA_2) \cong \OS(\cA_1)\otimes\OS(\cA_2).\end{equation}
If $F$ is a flat of $\cA$, then there is a canonical map $$\OS(\cA)\to \OS(\cA^F)$$
defined by sending $u_H$ to $u_{F\cap H}$ if $F\not\subset H$ and to zero otherwise.

\begin{remark}
If $V$ is a vector space over $\C$, then $\OS(\cA)$
is canonically isomorphic to the cohomology
of the complement of $\cA$ \cite{OS}.  In this case, Equation \eqref{kunneth}
can be regarded as an application of the K\"unneth theorem.  For a topological interpretation of the map
from $\OS(\cA)$ to $\OS(\cA^F)$, see \cite[Section 3]{fs-braid}.
\end{remark}

Fix a natural number $i$.
By Example \ref{FS-flats}, we have an $\FSA$-module that assigns to a finite set $E$ the vector space $\OS^i\!\left(\cA_E\right)$,
and to a surjection $\varphi:E_1\to E_2$ the map
$$\OS^i\!\left(\cA_{E_1}\right) \to \OS^i\!\left((\cA_{E_1})^{F_\varphi}\right) \cong \OS^i\!\left(\cA_{E_2}\right).$$
We denote this module by $\OS^i_{\!A}$, and we denote the dual $\FSAop$-module by $(\OS^i_{\!A})^*$.
Similarly, by 
Example \ref{FSB-flats}, we have an $\FSB$-module that assigns to an object $(E,\sigma)$ the vector space $\OS^i\!\left(\cA_{(E,\sigma)}\right)$,
and to a morphism $\varphi:(E_1,\sigma_1)\to (E_2,\sigma_2)$ the map
$$\OS^i\!\left(\cA_{(E_1,\sigma_1)}\right) \to \OS^i\!\left((\cA_{(E_1,\sigma_1)})^{G_\varphi}\right) \cong \OS^i\!\left(\cA_{(E_2,\sigma_2)}\right).$$
We denote this module by $\OS^i_{\!B}$, and we denote the dual $\FSBop$-module by $(\OS^i_{\!B})^*$.

\begin{proof}[Proof of Proposition \ref{OSB-prop}.]
We have $(\OS^0_B)^* \cong Q_0$, so the first statement is trivial, and we may assume that $i>0$.
Since the Orlik--Solomon algebra is generated in degree 1, $\OS^i_{\!B}$ is a quotient
of $(\OS^1_B)^{\otimes i}$, and therefore $(\OS^i_{\!B})^*$ is a submodule of $\big((\OS^1_B)^*\big)^{\otimes i}$.
Thus it will suffice to show that, for any object of $\FSB$ with at least $2i$ free orbits, every element of 
$(\OS^1_B)^*(E,\sigma)^{\otimes i}$ is a linear combination of pullbacks of classes 
along various maps to smaller objects.  

Let $0\in E$ denote the unique fixed point.
The vector space $\OS^1_B(E,\sigma)$ is spanned by the elements $u_{ef}$ for unordered pairs $e\neq f$ that
are distinct from $0$ (recall that we have $u_{e0} = u_{e\sigma(e)}$ for any $e\neq 0$).  For such an unordered pair,
let $v_{ef}\in \OS^1_B(E,\sigma)^*$ be the element that evaluates to 1 on $u_{ef} = u_{\sigma(e)\sigma(f)}$ and to 0 on all other generators.
Then $(\OS^1_B)^*(E,\sigma)^{\otimes i}$ is spanned by classes of the form $v_{e_1f_1}\otimes\cdots \otimes v_{e_if_i}$.

Let $$F := \{e_1,\sigma(e_1),f_1,\sigma(f_1)\ldots,e_i,\sigma(e_i),f_i,\sigma(f_i),0\}\subset E,$$
so that $(F,\sigma)$ is an object of $\FSB$ with at most $2i$ free orbits.
Define a morphism $\varphi:(E,\sigma)\to(F,\sigma)$ by fixing $F\subset E$ and sending $E\smallsetminus F$ to $0$.
Our hypothesis implies that the class $v_{e_1f_1}\otimes\cdots \otimes v_{e_if_i}$ is sent to itself by the map
$$\varphi^*:(\OS^1_B)^*(F,\sigma)^{\otimes i} \to (\OS^1_B)^*(E,\sigma)^{\otimes i}.$$
If the cardinality of $F$ is strictly smaller than $2i+1$, then we are done.  If not, then the classes appearing in the definition of $F$ are all distinct,
so we may assume for ease of notation that $(F,\sigma)=[-2i,2i]$,
with $e_j = (2j-1)$ and $f_i = 2j$ for all $j$.

We will consider three morphisms $\psi_1,\psi_2,\psi_3$ from $[-2i,2i]$ to $[1-2i,2i-1]$,
and we will prove that the class $$v_{12}\otimes\cdots \otimes v_{2i-3,2i-2}\otimes v_{2i-1,2i} \in (\OS^1_B)^*[-2i,2i]^{\otimes i}$$
is in the span of the images of the pullbacks along these three morphisms.
Each of these morphisms will fix $[2-2i,2i-2]$, 
and they will be defined on the elements $\{2i-1,2i\}$ as follows:\footnote{Note that this determines what the morphisms do to the elements $\{-2i,1-2i\}$.}
\begin{itemize}
\item $\psi_1(2i) = 2i-1$ and $\psi_1(2i-1) = 1-2i$
\item $\psi_2(2i) = 2i-1$ and $\psi_2(2i-1) = 0$
\item $\psi_3(2i) = 0$ and $\psi_3(2i-1) = 2i-1$.
\end{itemize}
For each positive integer $j<i$, all three of these maps send the class $v_{2j-1,2j}$ to itself.  Furthermore, we have
\begin{eqnarray*}
\psi_1^*\left(v_{2i-1,1-2i}\right) &=& v_{2i-1,1-2i} + v_{2i,-2i} + v_{2i-1,2i}\\
\psi_2^*\left(v_{2i-1,1-2i}\right) &=& v_{2i-1,1-2i} + v_{2i-1,-2i} + v_{2i-1,2i}\\
\psi_3^*\left(v_{2i-1,1-2i}\right) &=& v_{2i,-2i} + v_{2i-1,-2i} + v_{2i-1,2i}
\end{eqnarray*}
and therefore $$\psi_1^*\left(v_{2i-1,1-2i}\right) - \psi_2^*\left(v_{2i-1,1-2i}\right) + \psi_3^*\left(v_{2i-1,1-2i}\right) = v_{2i-1,2i}.$$
It follows that we have
\begin{eqnarray*}
v_{12}\otimes\cdots \otimes v_{2i-3,2i-2}\otimes v_{2i-1,2i} &=&
\psi_1^*\left(v_{12}\otimes\cdots \otimes v_{2i-3,2i-2}\otimes v_{2i-1,1-2i}\right)\\
&& - \psi_2^*\left(v_{12}\otimes\cdots \otimes v_{2i-3,2i-2}\otimes v_{2i-1,1-2i}\right)\\
&& + \psi_2^*\left(v_{12}\otimes\cdots \otimes v_{2i-3,2i-2}\otimes v_{2i-1,1-2i}\right).
\end{eqnarray*}
This completes the proof of smallness.  For the final statement, we note that
$\dim \OS_{\!B}^1[-n,n] = n^2$, therefore 
$\dim \OS_{\!B}^i[-n,n] \leq \binom{n^2}{i}$, and
$$\lim_{n\to \infty} \frac{\binom{n^2}{i}}{(2i-1)^n} = 0.$$
Thus $r_{2i-1}\!\left((\OS^i_{\!B})^*\right)=0$.
\end{proof}

\section{Combining small modules}
We begin with the following analogue of \cite[Lemma 4.2]{fs-braid}, which mixes modules over $\FSAop$ and $\FSBop$.

\begin{lemma}\label{composition}
Let $N$ be an $\FSBop$-module and let $M_1,\ldots,M_p$ be $\FSAop$-modules, with $N$ $d$-small and $M_i$ $c_i$-small for all $i$.
Consider the $\FSBop$-module $R$ defined on objects by the formula
$$R(E,\sigma) = \bigoplus_{\varphi:(E,\sigma)\to[-p,p]} N(\varphi^{-1}(0),\sigma)\otimes M_1(\varphi^{-1}(1))\otimes\cdots\otimes M_p(\varphi^{-1}(p)),$$
with maps defined in the natural way.  The module $R$ is $(d+c_1+\cdots+c_p)$-small.
\end{lemma}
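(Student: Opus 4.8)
The plan is to reduce the statement to a ``generators'' computation using the machinery of Section 2, exactly in the spirit of the analogous type $A$ result \cite[Lemma 4.2]{fs-braid}. First I would unwind what it means for $N$ to be $d$-small and each $M_i$ to be $c_i$-small: by definition there are modules $\wt N$ finitely generated in degree $\leq d$ and $\wt M_i$ finitely generated in degree $\leq c_i$, together with subquotient presentations $N = \wt N'/\wt N''$ and $M_i = \wt M_i'/\wt M_i''$. Forming the analogous functor $\wt R$ out of the $\wt N, \wt M_i$ (with the same formula, summing over $\varphi:(E,\sigma)\to[-p,p]$), one checks that $R$ is a subquotient of $\wt R$, since tensor products and direct sums are exact in characteristic zero and the subquotient structure passes through the formula termwise. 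So it suffices to show that if $N$ is finitely generated in degree $\leq d$ and each $M_i$ in degree $\leq c_i$, then $R$ is finitely generated in degree $\leq d+c_1+\cdots+c_p$.

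Next I would reduce to the case where $N = P_{[-d,d]}$ is a principal projective and each $M_i = P_{[c_i]}$, because $R$ is additive and right-exact in the arguments $N, M_1,\ldots,M_p$ separately (a surjection $\bigoplus_j P_{[-d,d]} \twoheadrightarrow N$ induces a surjection on the level of $R$, and similarly for each $M_i$), so a generating set for the principal-projective case yields one in general after an obvious bookkeeping. For the principal case, the key computation is to identify $R(E,\sigma)$ concretely: a basis is indexed by tuples consisting of a morphism $\varphi:(E,\sigma)\to[-p,p]$ in $\FSB$, a morphism $\varphi^{-1}(0)\to[-d,d]$ in $\FSB$, and morphisms $\varphi^{-1}(e)\to[c_e]$ in $\FSA$ for $e=1,\ldots,p$. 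Such data assembles — since the classes of type $A$ and type $B$ Coxeter arrangements are closed under the relevant products and contractions, cf.\ Examples \ref{FS-flats} and \ref{FSB-flats} — into a single morphism in $\FSB$ from $(E,\sigma)$ to an object built by gluing $[-d,d]$ with $p$ copies of sets of size $c_1,\ldots,c_p$ along the fixed point, and that glued object has exactly $d+c_1+\cdots+c_p$ free orbits. Tracking how the $\FSBop$-module maps act on basis elements (they act by precomposition in $\FSB$ on each of the pieces, which is visibly compatible), one sees that $R$ is exactly the principal projective $P_{[-(d+c_1+\cdots+c_p),\,d+c_1+\cdots+c_p]}$ up to a finite direct sum indexed by isomorphism classes of such gluings — in particular finitely generated in degree $\leq d+c_1+\cdots+c_p$.

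The main obstacle I expect is the bookkeeping in this last identification: one has to be careful that the maps in the module $R$ — ``defined in the natural way'' — genuinely correspond to composition in $\FSB$ with the glued target object, and in particular that property (i)/(ii)-type orderings play no role here (they do not, since $\FSB$ itself has no ordering — the ordering lives only in the Gröbner cover $\OSB$). A secondary subtlety is making the reduction to principal projectives fully rigorous in all $p+1$ variables simultaneously rather than one at a time; I would handle this by iterating the one-variable right-exactness, noting at each stage that the formula for $R$ is a finite direct sum of tensor products and hence preserves surjections in each slot. Once the principal case is pinned down the smallness bound is immediate from the free-orbit count.
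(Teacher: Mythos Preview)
Your proposal is correct and follows essentially the same route as the paper: reduce by the subquotient-preserving nature of smallness to the case where $N$ and the $M_i$ are principal projectives, then identify the tuple $(\varphi,\;\varphi^{-1}(0)\to[-n,n],\;\varphi^{-1}(i)\to[m_i])$ with a single $\FSB$-morphism $(E,\sigma)\to[-(n+m_1+\cdots+m_p),\,n+m_1+\cdots+m_p]$, so that $R$ is itself a principal projective of the right degree. The only minor imprecision is your hedge ``up to a finite direct sum indexed by isomorphism classes of such gluings'' --- in fact there is a unique glued object up to isomorphism, and the paper's computation shows $R$ is exactly that single principal projective.
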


\begin{proof}
Since smallness is preserved by taking direct sums and passing to subquotients, we may immediately reduce to the case where
$N$ is the principal projective $P_{[-n,n]}$ for some $n\leq d$ and for each $i$, $M_i$ is the principal projective $P_{[m_i]}$ for some $m_i\leq c_i$.
Then \begin{eqnarray*}
R(E,\sigma) &\cong& \bigoplus_{\varphi:(E,\sigma)\to[-p,p]} N(\varphi^{-1}(0),\sigma)\otimes M_1(\varphi^{-1}(1))\otimes\cdots\otimes M_p(\varphi^{-1}(p))\\
&\cong& \bigoplus_{\varphi:(E,\sigma)\to[-p,p]} \C\left\{\Hom_{\FSB}\!\big((\varphi^{-1}(0),\sigma),[-n,n]\big)\times
\prod_{i=1}^p \Hom_{\FSA}\!\big(\varphi^{-1}(i),[m_i]\big)\right\}\\
&\cong& \C\Big\{\Hom_{\FSB}\!\big((E,\sigma), [-(n+m_1+\cdots+m_p),(n+m_1+\cdots+m_p)]\big)\Big\}\\
&\cong& P_{[-(n+m_1+\cdots+m_p),(n+m_1+\cdots+m_p)]}(E,\sigma).
\end{eqnarray*}
Thus $R$ is $(n+m_1+\cdots+m_p)$-small, and therefore $(d+c_1+\cdots+c_p)$-small.
\end{proof}

For any natural numbers $p$ and $i$ and any object $(E,\sigma)$ of $\FSB$, let
\begin{eqnarray*}
C_{p,i}(E,\sigma) &:=& \bigoplus_{\varphi:(E,\sigma)\to[-p,p]} \OS^i\!\left((\cA_{(E,\sigma)})_{G_\varphi}\right)\\
&\cong& \bigoplus_{\varphi:(E,\sigma)\to[-p,p]} \OS^i\!\left(\cA_{(\varphi^{-1}(0),\sigma)}\times\cA_{\varphi^{-1}(1)}\times\cdots\times\cA_{\varphi^{-1}(p)}\right)\\
&\cong& \bigoplus_{\varphi:(E,\sigma)\to[-p,p]} \Big(\OS\!\left(\cA_{(\varphi^{-1}(0),\sigma)}\right) \otimes \OS\!\left(\cA_{\varphi^{-1}(1)}\right)
\otimes\cdots\otimes\OS\!\left(\cA_{\varphi^{-1}(p)}\right)\Big)^i\\
&=& \bigoplus_{\substack{\varphi:(E,\sigma)\to[-p,p]\\ i_0+i_1+\cdots+i_p=i}}
\OS^{i_0}_B\!\left(\varphi^{-1}(0),\sigma\right)\otimes \OS_{\!A}^{i_1}\!\left(\varphi^{-1}(1)\right)\otimes\cdots\otimes\OS_{\!A}^{i_p}\!\left(\varphi^{-1}(p)\right).
\end{eqnarray*}
Then $C_{p,i}$ is naturally an $\FSB$-module, and its dual $C_{p,i}^*$ is an $\FSBop$-module.
The following proposition is the type $B$ analogue of \cite[Proposition 5.3]{fs-braid}, and will be needed in the next section for the proof of Theorem \ref{KLB-thm}.

\begin{proposition}\label{C-small}
If $i>0$, the $\FSBop$-module $C_{p,i}^*$ is $(2i-1+p)$-small.  If $i=0$, it is $p$-small.
\end{proposition}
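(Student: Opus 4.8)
The plan is to decompose $C_{p,i}$ as a direct sum indexed by the ways of distributing the homological degree $i$ among the factors $\OS_B^{i_0}(\varphi^{-1}(0),\sigma)$, $\OS_A^{i_1}(\varphi^{-1}(1))$, ..., $\OS_A^{i_p}(\varphi^{-1}(p))$, and then recognize each summand (as an $\FSB$-module in $(E,\sigma)$) as exactly the kind of ``combined'' module handled by Lemma~\ref{composition}. Concretely, for a fixed composition $i_0+i_1+\cdots+i_p=i$, the corresponding piece of $C_{p,i}$ is the module $R$ of Lemma~\ref{composition} built from $N=\OS^{i_0}_B$ and $M_j=\OS^{i_j}_A$. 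Dualizing is harmless: the dual of a finite direct sum is the direct sum of the duals, and $d$-smallness passes to subquotients and direct sums, so it suffices to bound the smallness of each summand $\left(\OS^{i_0}_B\right)^*$ combined with the $\left(\OS^{i_j}_A\right)^*$ via the dual of Lemma~\ref{composition}'s construction.

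The key numerical input is then Propositions~\ref{OSA-prop} and~\ref{OSB-prop}: $(\OS^0_A)^*$ is $1$-small; for $i_j>0$, $(\OS^{i_j}_A)^*$ is $2i_j$-small; $(\OS^0_B)^*$ is $0$-small; for $i_0>0$, $(\OS^{i_0}_B)^*$ is $(2i_0-1)$-small. By Lemma~\ref{composition} the summand attached to the composition $(i_0,i_1,\ldots,i_p)$ is $d$-small with
\[
d \;=\; \big(\text{bound for }(\OS^{i_0}_B)^*\big) \;+\; \sum_{j=1}^p \big(\text{bound for }(\OS^{i_j}_A)^*\big).
\]
First I would treat the case $i>0$. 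Write $S=\{j\in\{1,\ldots,p\}\mid i_j>0\}$ and $s=|S|$. If $i_0>0$, the bound is $(2i_0-1)+\sum_{j\in S}2i_j + (p-s)\cdot 1 = 2i-1+p-2i_0 i'$... more carefully: $(2i_0-1) + \sum_{j\in S}2i_j + (p-s) = 2(i_0+\sum_{j\in S}i_j) - 1 + p - s = 2i-1+p-s \le 2i-1+p$. If $i_0=0$, the bound is $0 + \sum_{j\in S}2i_j + (p-s) = 2i+p-s$; since $i>0$ forces $s\ge 1$, this is $\le 2i-1+p$. In all cases the summand is $(2i-1+p)$-small, hence so is the whole direct sum $C_{p,i}$, and dually $C_{p,i}^*$. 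For $i=0$ there is only the composition with all parts zero, giving bound $0 + p\cdot 1 = p$, so $C_{p,0}^*$ is $p$-small.

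The main obstacle is not any of the smallness bookkeeping, which is routine once the decomposition is set up, but rather checking that the $\FSB$-module structure on $C_{p,i}$ genuinely matches the one assumed in Lemma~\ref{composition} — i.e.\ that the localization isomorphism $\OS^i\!\left((\cA_{(E,\sigma)})_{G_\varphi}\right)\cong \OS(\cA_{(\varphi^{-1}(0),\sigma)})\otimes\bigotimes_j\OS(\cA_{\varphi^{-1}(j)})$ from Example~\ref{FSB-flats} and Equation~\eqref{kunneth} is compatible with the transition maps of $C_{p,i}$ coming from $\FSB$-morphisms, so that each graded-composition summand is honestly a submodule (or direct summand) of the $R$ in Lemma~\ref{composition}. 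I would spell this out by noting that an $\FSB$-morphism $(E,\sigma)\to(E',\sigma')$ acts on $C_{p,i}$ by reindexing the maps $\varphi$ to $[-p,p]$ and applying the Orlik--Solomon contraction maps factor-by-factor, which is exactly the ``maps defined in the natural way'' of Lemma~\ref{composition}; the homological grading is preserved, so the decomposition by $(i_0,\ldots,i_p)$ is a decomposition of $\FSB$-modules. Once that compatibility is in hand, the proof is complete.
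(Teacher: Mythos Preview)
Your proposal is correct and follows essentially the same route as the paper: decompose $C_{p,i}^*$ according to compositions $(i_0,\ldots,i_p)$ of $i$, apply Lemma~\ref{composition} to each summand with $N=(\OS^{i_0}_B)^*$ and $M_j=(\OS^{i_j}_A)^*$, and plug in the bounds from Propositions~\ref{OSA-prop} and~\ref{OSB-prop}. Your bookkeeping via $s=|\{j\ge 1: i_j>0\}|$ is equivalent to the paper's via $d=|\{k\ge 0: i_k=0\}|$ (indeed $2i-1+d = 2i-1+p-s$ when $i_0>0$ and $=2i+p-s$ when $i_0=0$), and both lead to the same maximum $2i-1+p$ for $i>0$ and $p$ for $i=0$; your added remarks on dualizing and on compatibility of the $\FSB$-module structure with the decomposition are things the paper leaves implicit.
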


\begin{proof}
By Propositions \ref{OSA-prop} and \ref{OSB-prop} and Lemma \ref{composition}, 
the direct summand of $C_{p,i}^*$ corresponding to the tuple $(i_0,i_1,\ldots,i_p)$ is 
$(2i-1+d)$-small,
where $d$ is the number of $k\in\{0,1,\ldots,p\}$ such that $i_k=0$.  If $i>0$, the maximum possible value of $d$ is $p$, 
so the entire sum is $(2i-1+p)$-small.  If $i=0$, then $d=p+1$, and the sum is $p$-small.
\end{proof}

\section{Kazhdan--Lusztig coefficients}\label{sec:KL}
Let $V$ be a vector space over $\C$ and $\cA$ a hyperplane arrangement in $V$ with $\bigcap_{H\in\cA} H = \{0\}$.
We have an inclusion
$$V\to \prod_{H\in\cA} V/H \cong \prod_{H\in\cA} \mathbb{A}^1 \subset \prod_{H\in\cA} \mathbb{P}^1.$$
Let $Y_\cA$ be the closure of $V$ inside of the product of projective lines, and let $X_\cA\subset Y_\cA$ be the open subset consisting
of points where no coordinate is equal to zero.  The affine variety $X_\cA$ was introduced in \cite{PS}, and is called the {\bf reciprocal plane} of $\cA$.
We will be interested in the intersection cohomology of $X_{\cA}$ with coefficients in $\C$, 
which vanishes in odd degree, and has the property that its Poincar\'e polynomial
$$\sum_{i\geq 0} t^i \dim \IH^{2i}(X_{\cA})$$
is equal to the {\bf Kazhdan--Lusztig polynomial} of $\cA$ \cite[Proposition 3.12]{EPW}.  For this reason, we may regard the vector space 
$\IH^{2i}\!\left(X_{\cA}\right)$ as a catigorification of the $i^\text{th}$ Kazhdan--Lusztig coefficient of $\cA$.

If $F$ is a flat of $\cA$, there is a (noncanonical) inclusion of varieties $X_{\cA^F}\to X_\cA$, which induces a (canonical)
map of intersection cohomology groups $\IH^{2i}\!\left(X_\cA\right)\to \IH^{2i}\!\left(X_{\cA^F}\right)$.
These maps are functorial \cite[Theorem 3.3]{fs-braid}; in particular, we have an $\FSA$-module $D^i_{\!A}$ that takes a finite set $E$ to the vector space $\IH^{2i}\!\left(X_{\cA_E}\right)$
and a morphism $\varphi:E_1\to E_2$ to the map
$$\IH^{2i}\!\left(X_{\cA_{E_1}}\right)\to \IH^{2i}\!\left(X_{(\cA_{E_1})^{F_\varphi}}\right)\cong \IH^{2i}\!\left(X_{\cA_{E_2}}\right),$$
and we have an $\FSB$-module $D^i_{\!B}$ that takes an object $(E,\sigma)$ to the vector space $\IH^{2i}\!\left(X_{\cA_{(E,\sigma)}}\right)$
and a morphism $\varphi:(E_1,\sigma_1)\to (E_2,\sigma_2)$ to the map
$$\IH^{2i}\!\left(X_{\cA_{E_1}}\right) \to \IH^{2i}\!\left(X_{(\cA_{E_1})^{F_\varphi}}\right) \cong \IH^{2i}\!\left(X_{\cA_{E_2}}\right).$$
  
\begin{proof}[Proof of Theorem \ref{KLB-thm}.]
For any hyperplane arrangement $\cA$, there a spectral sequence $N(i,\cA)$ converging
to $\IH^{2i}\!\left(X_{\cA}\right)$ with
$$N(i,\cA)_1^{p,q} = \bigoplus_{\dim F = p}\OS^{2i-p-q}\!\left(\cA_F\right)\otimes \IH^{2(i-q)}\!\left(X_{\cA^F}\right),$$
where the direct sum is over flats $F$ of $\cA$ \cite[Theorem 3.1]{fs-braid}.
For any object $(E,\sigma)$ of $\FSB$, let $N(i,E,\sigma) = N\!\left(i,\cA_{(E,\sigma)}\right)$.
Then $N(i,E,\sigma)$ converges to $D^i_{\!B}(E,\sigma)$, and
Example \ref{FSB-flats} tells us that
$$N(i,E,\sigma)_1^{p,q} \cong \left(C_{p,2i-p-q}(E,\sigma)\otimes D^{i-q}_{\!B}(p)\right)^{W_p}.$$
This construction is functorial \cite[Theorem 3.3]{fs-braid}, meaning that we have a spectral sequence $N(i)$ in the category of $\FSB$-modules
converging to $D^i_{\!B}$
with $$N(i)_1^{p,q} = \left(C_{p,2i-p-q} \otimes D^{i-q}_{\!B}(p)\right)^{W_p}.$$
Dualizing, we obtain a spectral sequence $N^*(i)$ in the category of $\FSBop$-modules converging to $(D^i_{\!B})^*$.
Since $N(i)_1^{p,q}$ is a submodule of $C_{p,2i-p-q} \otimes D^{i-q}_{\!B}(p)$,
$N^*(i)_1^{p,q}$ is a quotient of $C^*_{p,2i-p-q} \otimes D^{i-q}_{\!B}(p)^*$, and Proposition \ref{C-small} implies that it is 
$(2(2i-p-q)-1+p)$-small unless $p+q=2i$, in which case it is $p$-small.
Furthermore, we have $D^{i-q}_{\!B}(p) = 0$ unless either $(p,q) = (0,i)$ or $p>2(i-q)$ \cite[Proposition 3.4]{EPW}.

Let us consider first the case where $p+q=2i$.  Since $i>0$, we cannot have $(p,q) = (0,i)$, so we must have $p>2(i-q)$ for 
$N^*(i)_1^{p,q}$ to be nonzero.
This means that $p$ cannot be equal to $2i$, so we have $p\leq 2i$, which implies that $N^*(i)_1^{p,q}$ is $(2i-1)$-small.
Even better, it tells us that $N^*(i)_1^{p,q}$ is $(2i-2)$-small unless $p=2i-1$ and $q=1$.

Now let us consider the case where $p+q<2i$.
If $(p,q) = (0,i)$, then $(2(2i-p-q)-1+p) = 2i-1$, so $N^*(i)_1^{0,i}$ is $(2i-1)$-small.
If $p>2(i-q)$, then $2(2i-p-q)-1+p = 2(i-q)-p+2i-1<2i-1$,
so $N^*(i)_1^{p,q}$ is $(2i-2)$-small.

Since $N^*(i)$ converges to $(D^i_{\!B})^*$ and the entries of the $E_1$-page of $N^*(i)$ are all $(2i-1)$-small, 
we can conclude that $(D^i_{\!B})^*$ is $(2i-1)$-smallish.  Furthermore, the $E_\infty$ page of $N^*(i)$
is concentrated on the diagonal $p+q=2i$, hence
\begin{eqnarray*}r_{2i-1}\!\left(\left(D^i_{\!B}\right)^*\right) 
&=& \sum_{p,q} r_{2i-1}\!\Big(N^*(i)_\infty^{p,q}\Big)\\
&=& \sum_{p,q} (-1)^{p+q} r_{2i-1}\!\Big(N^*(i)_\infty^{p,q}\Big)\\
&=& \sum_{p,q} (-1)^{p+q} r_{2i-1}\!\Big(N^*(i)_1^{p,q}\Big).
\end{eqnarray*}
Since $r_{2i-1}$ vanishes on any $\FSBop$-module that is $(2i-2)$-small, this equation simplifies to
$$r_{2i-1}\!\left(\left(D^i_{\!B}\right)^*\right) = r_{2i-1}\!\left(N^*(i)_1^{2i-1,1}\right) + (-1)^i r_{2i-1}\!\left(N^*(i)_1^{0,i}\right).$$
We have $N^*(i)_1^{0,i} = C^*_{0,i} = (\OS_{\!B}^i)^*$, thus Proposition \ref{OSB-prop} says that $r_{2i-1}\!\left(N^*(i)_1^{0,i}\right) = 0$.
Finally, we have
$$N(i)_1^{2i-1,1} = \left(C_{2i-1,0}\otimes D^{i-1}_{\!B}[1-2i,2i-1]\right)^{W_{2i-1}}
\cong \left(P_{[1-2i,2i-1]}[-n,n]\otimes D^{i-1}_{\!B}[1-2i,2i-1]\right)^{W_{2i-1}}
,$$
so 
$$\dim N^*(i)_1^{2i-1,1}[-n,n] = \dim N(i)_1^{2i-1,1}[-n,n]
= \frac{\dim P_{[1-2i,2i-1]}[-n,n] \cdot \dim D^{i-1}_{\!B}[1-2i,2i-1]}{|W_{2i-1}|},
$$
where the last equality follows from the fact that the group $W_{2i-1}$ 
acts freely on a basis for $P_{[1-2i,2i-1]}[-n,n]$.
We therefore have
$$r_{2i-1}\!\left(\left(D^i_{\!B}\right)^*\right) = r_{2i-1}\!\big(P_{[1-2i,2i-1]}\big)\cdot \frac{\dim D^{i-1}_{\!B}[1-2i,2i-1]}{|W_{2i-1}|} = \frac{\dim D^{i-1}_{\!B}[1-2i,2i-1]}{|W_{2i-1}|}.$$
This completes the proof.
\end{proof}

\begin{example}
We illustrate Theorems \ref{B-small} and \ref{KLB-thm} when $i=1$.
The coefficient of $t$ in the Kazhdan--Lusztig polynomial of a hyperplane arrangement $\cA$ is equal to the number of flats
of dimension 1 minus the number of hyperplanes \cite[Proposition 2.12]{EPW}, thus Example \ref{FSB-flats} tells us that
$$\dim D^{1}_{\!B}[-n,n] = \Big{|}\Hom_{\FSB}\!\big([-n,n],[-1,1]\big)/{W_1}\Big{|} - n^2 = \frac{3^n-1}{2} - n^2.$$
This means that $$H_{\!B}\!\left({(D^{1}_{\!B})^*}, t\right) = \sum_{n=0}^\infty\left(\frac{3^n-1}{2} - n^2\right) t^n = 
\frac{1}{2(1-3t)} - \frac{1}{2(1-t)} - \frac{t}{(1-t)^2} - \frac{2t^2}{(1-t)^3}.$$
This is a rational function with a poles at $1$ and $1/3$.  The pole at $1/3$ is simple, with residue
$$\frac{1}{2} = \frac{\dim D^{0}_{\!B}[-1,1]}{|W_{1}|}.$$
As a representation of $W_n$, $D^{1}_{\!B}[-n,n]^* \cong D^{1}_{\!B}[-n,n]$ is isomorphic to the permutation representation
with basis given by the flats of dimension 1 modulo the permutation representation with basis given by the hyperplanes \cite[Corollary 2.10]{GPY}.
If $n<3$, then $D^{1}_{\!B}[-n,n] = 0$, while if $n\geq 3$, using the branching rule in \cite[Lemma 6.1.3]{GeckPfeiffer} allows us to compute
$$D^{1}_{\!B}[-n,n] = \bigoplus_{\substack{|\la| \leq n\\ \ell(\la)\leq 2}} V_{\la,[n-|\la|]}^{\oplus c_\la},$$
where $$c_\la = \begin{cases}
\lfloor \la_1/2\rfloor -1& \text{if $\la = [n]$ or $\la = [n-1,1]$}\\
\lfloor \la_1/2\rfloor & \text{if $\la = [n-2,2]$ or $\la = [n-2]$}\\
\lfloor \la_1/2\rfloor + 1& \text{otherwise.}
\end{cases}$$
\end{example}

\bibliography{./symplectic}

\def\cprime{$'$}
\providecommand{\bysame}{\leavevmode\hbox to3em{\hrulefill}\thinspace}
\providecommand{\MR}{\relax\ifhmode\unskip\space\fi MR }
\providecommand{\MRhref}[2]{%
  \href{http://www.ams.org/mathscinet-getitem?mr=#1}{#2}
}
\providecommand{\href}[2]{#2}
\begin{thebibliography}{EPW16}

\bibitem[EPW16]{EPW}
Ben Elias, Nicholas Proudfoot, and Max Wakefield, \emph{The {K}azhdan-{L}usztig
  polynomial of a matroid}, Adv. Math. \textbf{299} (2016), 36--70.

\bibitem[GP00]{GeckPfeiffer}
Meinolf Geck and G\"{o}tz Pfeiffer, \emph{Characters of finite {C}oxeter groups
  and {I}wahori-{H}ecke algebras}, London Mathematical Society Monographs. New
  Series, vol.~21, The Clarendon Press, Oxford University Press, New York,
  2000.

\bibitem[GPY17]{GPY}
Katie Gedeon, Nicholas Proudfoot, and Benjamin Young, \emph{The equivariant
  {K}azhdan--{L}usztig polynomial of a matroid}, J. Combin. Theory Ser. A
  \textbf{150} (2017), 267--294.

\bibitem[OS80]{OS}
Peter Orlik and Louis Solomon, \emph{Combinatorics and topology of complements
  of hyperplanes}, Invent. Math. \textbf{56} (1980), no.~2, 167--189.

\bibitem[PR]{PR-genus}
Nicholas Proudfoot and Eric Ramos, \emph{The contraction category of graphs},
  \textsf{arXiv:1907.11234}.

\bibitem[PR19]{PR-trees}
\bysame, \emph{Functorial invariants of trees and their cones}, Selecta Math.
  (N.S.) \textbf{25} (2019), no.~4, Paper No. 62, 28.

\bibitem[PS06]{PS}
Nicholas Proudfoot and David Speyer, \emph{A broken circuit ring}, Beitr\"age
  Algebra Geom. \textbf{47} (2006), no.~1, 161--166.

\bibitem[PY17]{fs-braid}
Nicholas Proudfoot and Ben Young, \emph{Configuration spaces, {$\rm
  FS^{op}$}-modules, and {K}azhdan-{L}usztig polynomials of braid matroids},
  New York J. Math. \textbf{23} (2017), 813--832.

\bibitem[SS17]{sam}
Steven~V. Sam and Andrew Snowden, \emph{Gr\"obner methods for representations
  of combinatorial categories}, J. Amer. Math. Soc. \textbf{30} (2017), no.~1,
  159--203.

\bibitem[Wil14]{Wilson}
Jennifer C.~H. Wilson, \emph{{$\rm FI_\scr W$}-modules and stability criteria
  for representations of classical {W}eyl groups}, J. Algebra \textbf{420}
  (2014), 269--332.

\end{thebibliography}
\bibliographystyle{amsalpha}

\end{document}